\numberwithin{equation}{section}
\numberwithin{equation}{section}
\definecolor{Mygrey}{gray}{0.75}
\def\displayandname#1{\rlap{$\displaystyle\csname #1\endcsname$}%
                      \qquad \texttt{\char92 #1}}
\def\circi{{\bigcirc\!\!\!\! i\,}}
\def\circj{{\bigcirc\!\!\!\! j\,}}
\newtheorem*{lemma*}{Lemma}
\newcommand{\bi}{\mathbf{i}}
\newcommand{\bbi}{\bar{\mathbf{i}}}
\newcommand{\sbi}{\mathbf{i}^\star}
\newcommand{\bj}{\mathbf{j}}
\newcommand{\bk}{\mathbf{k}}
\newcommand{\bs}{\mathbf{s}}
\newcommand{\puiseuxA}{{\bm A}}
\newcommand{\puiseuxf}{{\bm f}}
\newcommand{\barred}[1]{{\,\bar #1\,}}
\newcommand{\circled}[1]{{\,\bigcirc\hspace{-.123in} #1\,}}
\definecolor{Mygrey}{gray}{0.75}
\def\displayandname#1{\rlap{$\displaystyle\csname #1\endcsname$}%
                      \qquad \texttt{\char92 #1}}
\def\url@leostyle{%
  \@ifundefined{selectfont}{\def\UrlFont{\sf}}{\def\UrlFont{\small\ttfamily}}}
\DeclareMathAlphabet{\mathbbold}{U}{bbold}{m}{n}
\newcommand{\zero}{\mathbbold{0}}
\newcommand{\unit}{\mathbbold{1}}
\newcommand{\per}{\operatorname{per}}
\newcommand{\Id}{\operatorname{Id}}
\newcommand{\rmax}{\mathbb{R}_{\max}}
\newcommand{\R}{\mathbb{R}}
\newcommand{\K}{\mathbb{K}}
\newcommand\val{{\operatorname{val}}}
\newcommand\TP{{\mathsf{TP}}}
\newcommand\TN{{\mathsf{TN}}}
\newcommand\UM{{\mathsf{UM}}}
\newcommand\trop{{\operatorname{trop}}}
\newcommand\GL{{\operatorname{GL}}}
\newtheorem{thm}{Theorem}[section]
\newtheorem{pro}[thm]{Proposition}
\newtheorem{cor}[thm]{Corollary}
\theoremstyle{definition}
\newtheorem{df}[thm]{Definition}
\theoremstyle{remark}
\newtheorem{rem}[thm]{Remark}
\newtheorem{exa}[thm]{Example}
\title{Tropical planar networks}
\author{{S}t\'ephane Gaubert}
\address{St\'ephane Gaubert,
INRIA Saclay--\^Ile-de-France and CMAP, \'Ecole 
polytechnique, IP Paris, CNRS. Address: CMAP, \'Ecole polytechnique,
Route de Saclay,
91128 Palaiseau Cedex, France.}
\email{Stephane.Gaubert@inria.fr}
\author{{A}di Niv}
\address{Adi Niv,
Mathematics Department, Science Faculty, Kibbutzim College.
Address: Kibbutzim College, 149 Namir Rd., Tel-Aviv, Israel.}
\email{adi.niv@smkb.ac.il}
\thanks{The first author acknowledges the support of the Gaspard Monge (PGMO) program of Fondation Math\'ematique Hadamard, EDF, Orange and Thales, of the ICODE institute of Paris-Saclay, also of Labex Hadamard. This work started when the second author was with INRIA, being supported by the Chateaubriand program from the French Ministery of Foreign Affairs and by an INRIA fellowship.}
\begin{document}

\begin{abstract}

  We show that every tropical totally positive matrix can be uniquely represented as the transfer matrix of a canonical totally connected weighted planar network.

  We deduce a uniqueness theorem for the factorization
  of a tropical totally positive in terms of elementary Jacobi matrices.

\vskip 0.15 truecm

\noindent \textit{Keywords:  Planar networks; total positivity; total nonnegativity;   compound matrix;  
permanent.}
\vskip 0.1 truecm

\noindent \textit{AMSC: 15A15 (Primary), 15A09, 15A18, 15A24, 15A29, 15A75, 15A80, 
15B99.} 	
\end{abstract}

\maketitle

\section{Introduction}
\subsection{Motivation and context}
A real matrix is said to be {\em totally positive} (resp.~{\em totally nonnegative}) 
if all its minors are positive (resp.~nonnegative).
Matrices that
are totally nonnegative and that have a totally positive power,
a.k.a, oscillatory matrices, go back to the work of Gantmacher and Krein, see~\cite{Gantmacher&Krein},
and also~\cite[\S4]{TPM}.
Totally nonnegative matrices arise in several classical fields,
like probability theory~\cite{KMc} or
approximation theory (see e.g.~\cite{kluwertotalpositivity}).
they have appeared
in the theory of canonical bases for quantum groups~\cite{berensteinparametrization}.
We refer the reader to the monograph by Fallat and Johnson~\cite{Fallat&Johnson} or to the survey by Fomin and Zelevinski~\cite{F&Z} for more information.

In~\cite{GN17}, we investigated the tropical analogues of totally
positive and totally nonnegative matrices. 
Tropical totally nonnegative matrices
can be defined as images by the nonarchimedean
valuation of totally nonnegative matrices over a real closed nonarchimedean field, like the field of Puiseux series with real coefficients. Alternatively,
tropical totally nonnegative matrices can be defined by requiring their minors
to be ``nonnegative'' in the tropical sense. We showed
that these two approaches yield the same
class of matrices, which is nothing but the set of
(opposite of) Monge matrices, a classical family of matrices
arising in combinatorial optimization and optimal transport~\cite{BKR}.
Monge matrices are defined by requiring only the tropical nonpositivity
of $2\times 2$ minors. They can be identified to submodular functions
defined on a finite two-dimensional grid.

A fundamental feature of classical totally nonnegative
matrices arises when considering
elementary operations on matrices, encoded
by Jacobi matrices:
any invertible totally nonnegative matrix
can be factored as a product of elementary Jacobi matrices
with nonnegative entries. This result is best understood
in terms of planar networks: one can associate to a weighted planar
network with $n$ sources and $n$ targets a {\em transfer matrix}.
This matrix encodes the sums of weights of paths between
pairs of source and target nodes. Then, any product
of Jacobi matrices can be realized as the transfer
matrix of a planar network. Furthermore, classical results 
of Loewner and Whitney show that there
is a canonical choice of planar network
leading to a parametrization result:
every totally positive matrix is represented
in a unique way by positive weights in this
planar network,
see~\cite{Loewner},~\cite{Whitney}, and Theorem~12 of~\cite{F&Z}.

We observed in~\cite{GN17} that, in the tropical setting,
totally nonnegative matrices can still be represented as tropical transfer
matrices of planar networks, or as a product of tropical elementary Jacobi matrices.
However, the uniqueness issue for such a representation
was not addressed in~\cite{GN17}.
Indeed, it is a rule in tropical algebra
that tropicalizing makes things more degenerate,
and uniqueness results do not always
carry over.

\subsection{Main results}
Our main result, \Cref{PN} solves the question of the uniqueness
of the weights in the representation of tropical totally
positive or nonnegative matrices by planar networks.
We show that a tropical totally positive matrix can be
represented in a unique way by a weight vector
of a canonical planar network.
Moreover, we characterize the weight vectors which arise in
this representation, by certain ``trapeze'' and ``parallelogram''
inequalities. This theorem reveals a fundamental discrepancy with the
classical theory, in which the weights are arbitrary positive numbers:
indeed, the classical parametrization of totally positive matrices
involves parameters that belong to the standard positive cone (the hyper-orthant). Here, we need to cut out the tropical analogue of this cone by
inequalities to obtain a bijective parametrizing set.
This operation eliminates regions of the parameter space representing matrices
which are tropical totally nonnegative but not totally positive.

As an application, we deduce from \Cref{PN} a uniqueness result for the
decomposition of a tropical totally positive matrix
as a product of elementary Jacobi matrices, see \Cref{cor-unique}.
We also deduce a result concerning the factorization
of totally positive matrices over a nonarchimedean valued field:
the valuation of the factors can be recovered
only from the valuation of the product, if the valuation
of this product is a tropical totally positive matrix.

All the previous results are stated for a canonical planar network
arising in the theory of totally positive matrices.
There are other planar networks,
corresponding to different ``factorization schemes'',
which lead to bijective parametrizations,
and
that different parametrizations
are related by birational subtraction-free transformations~\cite{F&Z}.
This entails that our uniqueness results hold as well for planar
networks arising all factorization schemes (\Cref{cor-uniquenew}).

\subsection{Related work}
Beside the fundamental results on classical total positivity,
on which we build (see e.g.~\cite{berensteinparametrization,F&Z,Fallat&Johnson}), our study is motivated or inspired, directly or indirectly,
by a series of works in tropical algebra and geometry.

To understand better the nature of \Cref{PN}, it
is useful to draw an analogy with Choquet's theory. 
  The latter studies the descriptions of points of convex
  set as the barycenter of measures supported by extreme points
  of the set; the tropical analogue of Choquet theory
  has received attention, both in finite dimension~\cite{GK,BSS} and infinite dimension~\cite{agw04b}.
  Whereas it is a trivial fact that a point in a classical simplex can be uniquely represented as a barycenter of its vertices, the tropical analogue
  is already less trivial: it is only true that a point in the {\em interior}
  of a tropical
  simplex can be uniquely represented as a tropical
  barycenter of the tropical vertices of this simplex.
  This follows for instance from the representation of tropical
  polyhedra as polyhedral complexes, see~\cite[Th.~15]{DS}.
  The representation of totally positive matrices by weights
  of planar networks can be thought of as nonlinear Choquet
  theorem, in which the representing measure is replaced
  by weights. Then, \Cref{PN} is remarkably analogous to the
  Choquet theorem for tropical simplices: the uniqueness
  of the representation only holds for a matrix in $\TP^\trop$,
  i.e., for a matrix in the {\em interior} of $\TN^\trop(\R)$.
  
  More generally, a motivation comes from the study of semialgebraic sets over nonarchimedean fields. The goal here is to understand what kind of properties of semialgebraic objects over a nonarchimedean valued field can be inferred
  using images by the nonarchimedean valuation.
  Results in this spirit go back to Develin and Yu~\cite{develin2007}, who
  showed that tropical polyhedra are images of nonarchimedean polyhedra~\cite{develin2007}. The tropicalization
  of the set of symmetric positive definite matrices has been characterized in~\cite{YU}. The tropicalization of general semialgebraic sets, including spectrahedra, has been studied 
  in~\cite{alessandrini2013,skomra,yuetal}. For instance, results
  of~\cite{skomra} show that under a genericity condition on the valuations
  of the input,  a set defined by a finite collection of polynomial inequalities  over a non-archimedean field is non-empty if and only if the set
  defining by ``tropicalizing'' these inequalities is non-empty.
  i.e., ``existence'' results tropicalize under appropriate genericity
  assumptions. It is natural to ask whether ``uniqueness' results of a semi-algebraic nature can be generally tropicalized under appropriate conditions, \Cref{PN} shows a special situation in which the answer
  is positive.

  Some different issues of uniqueness of representations
  have been studied,
  in the work on tropical Pl\"ucker
  functions by Danilov, Karzanov and Koshevoy~\cite{danilov}, i.e.,
  functions on some subset $B$ of $\mathbb{Z}^n$ that obey tropical analogs
  of the classical Pl\"ucker relations. This article determines
  bases, i.e. subsets $B'\subset B$ such that a tropical Pl\"ucker
  function is uniquely determined by its restriction to $B'$.

  We finally note that tropical total positivity has been studied in several works.
Most of them concern the tropicalization of the totally positive part of the Grassmannian, ~\cite{POST,SW05}.
In the tropical setting,  totally positive matrices are related to the totally positive Grassmanian, albeit in a weaker way than in the classical case.
Indeed, in the latter case, there is a canonical bijective transformation (Stiefel embedding) between the space of totally positive matrices of size $m\times n$ and the totally positive Grassmanian $\operatorname{Gr}^+_{m,m+n}$, described in~\cite{POST}. This transformation is only an injection in the tropical setting, see the discussion in~\cite{GN17}.

\section{Background materials}
In this section, we recall basic properties and constructions,
needed to state our results.
We first recall definitions and notation
concerning the tropical structure and the notion
of total positivity.

\subsection{Tropical algebra}

The max-plus 
(or tropical) semifield, denoted by~$\rmax$,
is the set~$\mathbb{R}\cup\{-\infty\}$ equipped with the laws~$a\oplus b:= \max(a,b)$ 
and~$a\odot b:= a+b$. 
(See for instance~\cite{BCOQ92,TAG,MPA,butkovicbook,MacStur}.) It has a zero element,~$\zero=-\infty$, and a unit 
element,~$\unit=0$. We abuse notation by using the same symbol, $\rmax$
for the semifield and for its ground set. 
 
The tropical numbers can be thought of as the images
by the valuation of the elements of a nonarchimedean field.
In this perspective, a convenient, concrete,
choice of ordered nonarchimedean field, denoted
by $\K$, consists of
(formal, generalized) Puiseux series with real coefficients and real exponents.
Such a series can be written as 
\begin{align}
\puiseuxf:= \sum_{k\geq 0} a_k t^{b_k}\enspace , 
\label{e-puiseux}
\end{align}
where~$a_k\in \R$, $b_k\in \R$, and~$(b_k)$ is a decreasing sequence
converging to~$-\infty$. 
The {valuation} of~$\puiseuxf$ is defined to be the largest exponent of~$\puiseuxf$, 
i.e., ~$\val (\puiseuxf):= \sup \{b_k \mid a_k\ne 0\}$,
with the convention that~$\val (0)=-\infty$.
A nonzero series is said to be {\em positive} if its leading coefficient  is positive. This field, or rather its complexification, $\K[\sqrt{-1}]$,
has been studied in~\cite{MARK}, as a canonical tool in tropical geometry.
Other natural choices of nonarchimedean fields are discussed
in~\cite{alessandrini2013,skomra,yuetal}, in particular, fields
of absolutely convergent Puiseux series are also covered
in these works. The fact that the value group of $\mathbb{K}$ is $\R$
(instead of $\mathbb{Q}$ for ordinary Puiseux series)
simplifies some statements. The fact that $\K$ is real closed
(and so has the same first order theory as the field of real numbers) is also
helpful.

In tropical algebra, we are interested in relations
between properties of objects defined over~$\mathbb{K}$ 
and their tropical analogues.
In particular, the {\em tropical permanent} of~$A$ is defined as 
\begin{align}
\per (A):= 
\max_{\sigma \in S_d} \sum_{i\in[d]} A_{i,\sigma(i)} \enspace ,
\label{e-def-tper}
\end{align}
where~$S_d$ is the set of permutations on~$[d]:=\{1,\dots,d\}$, and~$\sum_{i\in[d]} 
A_{i,\sigma(i)}$ is the \textit{weight} of the permutation~$\sigma$ in~$\per(A)$. 

We say that the $n\times n$ matrix~$A$ is (tropically) {\em sign-nonsingular}
if~$\per (A) \neq -\infty$ and if all the permutations~$\sigma$, such that $A_{1,\sigma(1)}+\cdots+A_{n,\sigma(n)}$ is of maximum weight, 
have the same parity. Otherwise,~$A$ is said to be (tropically) {\em sign-singular}. 
We refer the reader to~\cite{shader} for more background
on the classical notion of sign-nonsingularity, and to~\cite{NSM,benchimol2013} for its tropical version. When~$A$ is  sign-nonsingular, and $A_{ij} = \val \puiseuxA_{ij}$ for some $n\times n$ matrix $\puiseuxA$ with entries in $\K$,
it is easily seen that
\[
\val (\det(\puiseuxA)) = \per(A)  \enspace ,
\] 
and the sign of~$\det(\puiseuxA)$ coincide with the sign of every permutation of 
maximal weight in~$\per(A)$.
A \textit{tropical minor}   is defined as the tropical permanent of a square 
submatrix. A tropical minor 
is said to be  \textit{tropically positive}
(resp.~\textit{tropically negative}) if all its 
permutations of maximum weight   are even (resp.~odd). It is said to be 
\textit{tropically nonnegative} (resp.~\textit{tropically nonpositive}) 
if either the above condition
holds or the submatrix is  sign-singular. This terminology can be justified
by embedding the max-plus semiring in the symmetrized max-plus semiring~\cite{LS,AGG14}.

\subsection{Total positivity and total nonnegativity}

We denote by~$\TP$ (resp.~$\TN$) the set of totally positive (resp.~totally 
nonnegative) matrices over a field. These are matrices whose minors
are all positive (resp.~nonnegative).
The set~${\TP_t}$ 
(resp.~${\TN_t}$) denotes matrices whose minors of size at most~$t$ 
are   positive (resp.~nonnegative).
Similarly, we shall denote by~$\TP^{\trop}$ (resp.~$\TN^{\trop}$) the set of tropical totally positive (resp.~tropical totally nonnegative) matrices,
which have entries in~$\R$ (resp.~in~$\rmax$).  These are matrices,
such that, for every minor of the matrix, all the optimal
solutions of the corresponding optimal assignment
problem are {\em even} permutations (resp., there is
at least one optimal permutation which is even).
Observe that an entry of a matrix of $\TN^{\trop}$ belongs
to $\rmax=\R\cup\{-\infty\}$; i.e., we allow the value
$-\infty$ which is tropically nonnegative. In contrast,
an entry of a matrix of $\TP^{\trop}$ belongs
to $\R$ (all finite real numbers being tropically positive).
We denote  by~$\TN^\trop(\R)=\TN^\trop$  the subset of matrices in $\TN^\trop$ 
whose entries are finite.
We also denote by~${\TP^{\trop}_t}$ (resp.~${\TN^{\trop}_t}$) 
the set of matrices with entries in~$\R$ (resp.~in~$\rmax$) whose every 
tropical minor of size at most~$t$ is tropically 
positive (resp.~tropically nonnegative). We finally define
$\TN^{\trop}_t(\R)$ to be the subset of $\TN^\trop_t$ consisting
of matrices with finite entries.

\subsection{Main results of~\cite{GN17} required for this paper}
We studied in \cite{GN17} the images by the valuation of the 
classical classes of totally positive or totally nonnegative matrices
over~$\K$, 
and related the classical and tropical notions of total nonnegativity.  
We showed that the image by the valuation of the set of tropical totally positive
matrices is determined by the tropical nonnegativity of $2\times 2$ minors:
$$
\TN_2^\trop(\R)
 =\TN^\trop (\R)
= \val(\TN(\K^*)) =\val(\TP(\K)) \enspace,
$$
where $\TP(\K)$ denotes the set of totally positive matrices
with entries in the nonarchimedean field $\K$, and
$\TN(\K^*)$ denotes the set of totally nonnegative matrices
with entries in $\K^*$. 

We also showed that~$\TN^\trop_2(\R)$  is precisely the 
set of opposites of \textit{Monge matrices}, named after Gaspard Monge.
The set of Monge matrices has an explicit polyhedral parametrization
which follows from results of~\cite{BKR} and~\cite{FIEDLER}.

Another main result of~\cite{GN17} provides a tropical analogue
of a theorem of Loewner and Whitney~\cite{Whitney,Loewner}
and~\cite[Theorem~12]{F&Z}.
The classical theorem shows that any invertible totally nonnegative
matrix is a product of nonnegative elementary Jacobi matrices,
a similar property holds in the tropical setting:
$$
\val(\GL_n(\K)\cap\TN(\K))=
\langle\text{tropical~Jacobi~elementary~matrices}\rangle\enspace .
$$
More information on Jacobi matrices will be given in \S\ref{S3}.

\subsection{Planar networks}\label{stpn}
We next recall the correspondence between
totally nonnegative matrices and planar networks, referring
the reader to~\cite{F&Z,Fallat&Johnson} for more information.
\begin{df} Let~$G$ be a weighted directed graph, whose edges are equipped with real numbers called \textit{weights}.
We assume there are $n$ distinguished nodes called sources and $m$ other distinguished  nodes called targets. 
The \textit{weight of a  path} between two nodes
is the product of the weights of the edges of this path.
Similarly, the
\textit{tropical weight} of this path 
is the sum of the weights of the edges of this path.
The \textit{transfer matrix}  of~$G$ is the~$n\times m$ matrix
whose $i,j$ entry is the sum of weights of all paths from source node $i$ to target node $j$, for $i\in[n]$ and $i\in[m]$, with the convention
that this sum is~$0$  if such a path does not exist. 
In~\cite{F&Z}, the term weight matrix is used instead of transfer matrix.
We chose the latter term here, as it avoids the confusion with the matrix of weights $(w_{i,j})$ appearing below.
Similarly, the \textit{tropical transfer matrix} of~$G$ is the $n\times m$
matrix whose $i,j$ entry is the maximal tropical weight of a path
from source node $i$ to target node $j$;
by convention, this maximal weight is~$-\infty$ 
if such a path does not exist.
The \textit{length} of a path is the number of its edges.

A  graph is called \textit{planar} if it can be drawn on a plane so that its edges have only endpoint-intersections. A \textit{planar network} is a weighted directed 
planar graph, with no cycles. 
Throughout, we assume a network has~$n$ sources and $n$ targets, numbered bottom
to top, with edges
assigned with real weights. 
We call a planar network \textit{totally connected} if for any
set $I$ of source nodes, and every set $J$ of target node
such that~$|I|=|J|$ there exists a collection of vertex-disjoint paths
such that every node of~$I$ is connected to one node of~$J$ by one of these paths.
In all the planar examples which follow, the edges will be oriented from left to right, without indicating explicitly the orientation of the graph.
 \end{df}

\begin{exa}\label{ex-easy} 
  The following planar network
\begin{center}
  \begin{tikzpicture}[main_node/.style={circle,fill=black,minimum 
size=0.05em,inner sep=1pt]}]
 \node[main_node] (1) at (0,0) {};
    \node[main_node] (2) at (-1,1)  {};
    \node[main_node] (3) at (2,1) {};
    \node[main_node] (4) at (1,0) {};
    \node[main_node] (5) at (-1,0) {};
    \node[main_node] (6) at (2,0) {};
 \draw[main_node]  (1) edge node{$3\ \ \ \ $} (2);   
 \draw[main_node]  (2) edge node{$\begin{array}{c}\alpha\\ {}\end{array}$} (3);  
 \draw[main_node]  (3) edge node{$\ \ \ \ 2$} (4);    
 \draw[main_node]  (5) edge node{ } (1);    
 \draw[main_node]  (6) edge node{ } (4);    
 \draw[main_node]  (1) edge node{$\begin{array}{c}1\\ {}\end{array}$} (4);
\end{tikzpicture}

  \end{center}
corresponds to the tropical transfer matrix 
\[ A=\left(\begin{array}{cc}1&3\\4&\max(6,\alpha)\end{array}\right)\enspace ,\]
where the source nodes (at left) and the target nodes (at right) are numbered from bottom to top.
\end{exa}

A totally connected planar network with $n$ source and target nodes is shown in \Cref{n}. The diagonal arcs, and the horizontal arcs in the middle of the
network, are equipped with weights $w_{i,j}$. 
 We refer to the matrix $(w_{i,j})$ as \textit{matrix of weights}. 
When a weight of an arc
is not shown on this graph, this weight will be interpreted as unitary
(i.e., weight $1$ if classical weights are considered,
and weight $0$ if tropical weights are considered).
This weighted planar network arises classically
in the parametrization
of totally positive matrices~\cite[Figure~2]{F&Z}.
We shall refer to this as the {\em canonical} totally connected planar network over $n$ sources and $n$ targets,  denoted  by $G_n$.

If the weights $w_{ij}$ are thought of as indeterminates, then, a path
from a source to a target node is determined uniquely by
its weight or by its tropical weight. For instance, the unique path with
tropical weight $w_{n,n-1}+ w_{n-1,n-1}+ w_{n-1,n}$
is shown by the dashed line in blue in~\Cref{n}.

\section{Parametrization of tropical totally positive matrices by planar networks}\label{stpnnew}

In this section, we establish our main result, \Cref{PN},
showing that a tropical totally positive matrix can be
represented in a unique way as the transfer matrix
of a weighted planar network.
\begin{df}
Let~$G_n$ be the canonical totally connected planar network
with weights $w_{ij}$, as in Figure~\ref{n}.  
The \textit{uppermost} path from $i$ to $j$ in $G$ is
the path with tropical weight
$$\UM_{i,j}=
\begin{cases}
\sum_{t=i}^j w_{i,t},&\ i\leq j\\
\sum_{t=j}^i w_{t,j},&\ i> j.
\end{cases}$$
\end{df}
The notion of uppermost path is illustrated in~\Cref{n}.
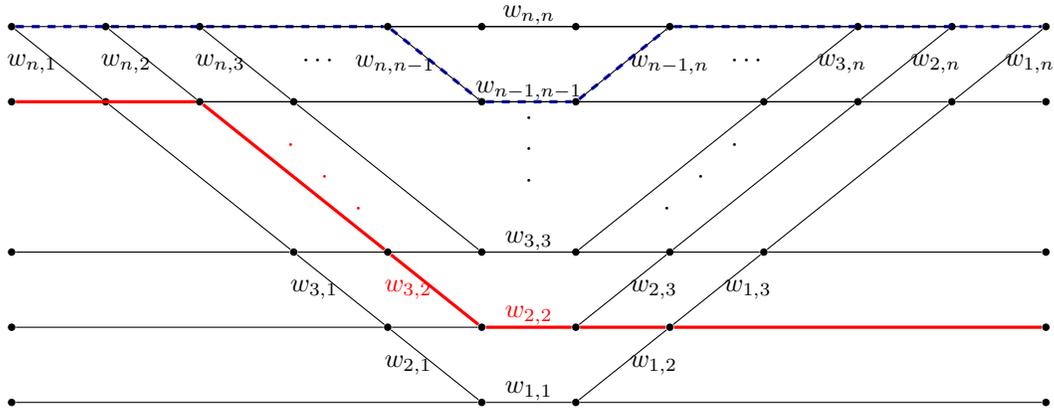
\begin{figure}[htbp]\begin{tikzpicture}[main_node/.style={circle,fill=black,
minimum size=0.05em,inner sep=1pt]}]
    \node[main_node] (31) at (-6.25,1) {};
    \node[main_node] (32) at (-6.25,0) {};
    \node[main_node] (30) at (-6.25,2) {};
    \node[main_node] (29) at (-6.25,4) {};
    \node[main_node] (15) at (-6.25,5) {};
    \node[main_node] (16) at (-5,5) {};
    \node[main_node] (11) at (-5,4) {};
    \node[main_node] (12) at (-3.75,4) {};
    \node[main_node] (25) at (-3.75,5) {};
    \node[main_node] (7) at (-2.5,2) {};
    \node[main_node] (21) at (-2.5,4) {};
    \node[main_node] (26) at (-1.25,5) {};
    \node[main_node] (2) at (-1.25,1)  {};
    \node[main_node] (8) at (-1.25,2) {};
    \node[main_node] (1) at (0,0) {};
    \node[main_node] (5) at (0,1) {};
    \node[main_node] (19) at (0,2) {};
   \node[main_node] (22) at (0,4) {};
    \node[main_node] (37) at (0,5) {};    
    \node[main_node] (38) at (1.25,5) {};
    \node[main_node] (4) at (1.25,0) {};
    \node[main_node] (24) at (1.25,4) {};
    \node[main_node] (20) at (1.25,2) {};
     \node[main_node] (6) at (1.25,1) {};
     \node[main_node] (3) at (2.5,1) {};
    \node[main_node] (27) at (2.5,5) {};
     \node[main_node] (9) at (2.5,2) {};
    \node[main_node] (10) at (3.75,2) {};
    \node[main_node] (23) at (3.75,4) {};
    \node[main_node] (13) at (5,4) {};
     \node[main_node] (28) at (5,5) {};
    \node[main_node] (17) at (6.25,5) {};
    \node[main_node] (14) at (6.25,4) {};
    \node[main_node] (18) at (7.5,5) {};
    \node[main_node] (33) at (7.5,4) {};
    \node[main_node] (34) at (7.5,2) {};
    \node[main_node] (35) at (7.5,1) {};
    \node[main_node] (36) at (7.5,0) {};
    \draw[color=blue,dashed,very thick] (15) --(26);
    \draw[color=blue,dashed,very thick] (26) --(22);
    \draw[color=blue,dashed,very thick] (22) --(24);
    \draw[color=blue,dashed,very thick] (24) --(27);
                    \draw[color=blue,dashed,very thick] (27) --(18);
 \draw[main_node]  (1) edge node{$\begin{array}{c}w_{1,1}\\ {}\end{array}$} (4);
 \draw[main_node]  (1) edge node{$w_{2,1}\ \ \ \ \ \ $} (2);   
 \draw[main_node]  (3) edge node{$\ \ \ \ \ \ \ w_{1,2}$} (4);    
\draw[main_node,color=red,very thick]  (5) edge node{$\begin{array}{c}w_{2,2}\\ {}\end{array}$} (6);  
\draw[main_node]  (2) edge node{} (5);  
\draw[main_node,color=red,very thick]  (3) edge node{} (6);  
 \draw[main_node]  (2) edge node{$w_{3,1}\ \ \ \ \ \ $} (7);
 \draw[main_node,color=red,very thick]  (5) edge node{$w_{3,2}\ \ \ \ \ \ $} (8);
 \draw[main_node]  (6) edge node{$\ \ \ \ \ \ \ w_{2,3}$} (9);
 \draw[main_node]  (3) edge node{$\ \ \ \ \ \ \ w_{1,3}$} (10);
\draw[main_node]  (7) edge node{} (10);  
\draw[main_node]  (8) edge node{$\begin{array}{c}w_{3,3}\\ {}\end{array}$} (9);  
\draw[main_node]  (12) edge node{$\begin{array}{c}{}\\{}\\w_{n-1,n-1}\\ \cdot\\\cdot\\\cdot\end{array}$} (13);  
\draw[main_node]  (16) edge node{$\begin{array}{c}w_{n,n}\\ {}\end{array}$} (17);  
\draw[main_node]  (11) edge node{$w_{n,1}\ \ \ \ \ \ $} (15);  
\draw[main_node]  (12) edge node{$w_{n,2}\ \ \ \ \ \ $} (16);  
\draw[main_node]  (21) edge node{$w_{n,3}\ \ \ \ \ \ $} (25);  
\draw[main_node]  (22) edge node{$\cdots\ \ w_{n,n-1}\ \ \ \ \ \ \ \ \ \ \ \ \ \ \ $} (26);  
\draw[main_node]  (14) edge node{$\ \ \ \ \ \ \ w_{1,n}$} (18);  
\draw[main_node]  (13) edge node{$\ \ \ \ \ \ \ w_{2,n}$} (17);  
\draw[main_node]  (23) edge node{$\ \ \ \ \ \ \ w_{3,n}$} (28);  
\draw[main_node]  (24) edge node{$\ \ \ \ \ \ \ \ \ \ \ \ \ \ \ \ \ w_{n-1,n}\ \ \cdots$} (27);  
\draw[main_node]  (11) edge node{} (14);  
\draw[main_node,color=red,very thick]  (29) edge node{} (12);  
\draw[main_node]  (15) edge node{} (18); 
\draw[main_node]  (7) edge node{} (11);  
\draw[main_node,color=red,very thick]  (8) edge node{$\ \ \ \ \ \ \ \begin{array}{ccc}\cdot &{}&{}\\{}&\cdot &{}\\{}&{}&\cdot \end{array}$} (12);  
\draw[main_node]  (19) edge node{} (21);  
\draw[main_node]  (20) edge node{$\ \ \ \ \ \ \ \begin{array}{ccc}{} &{}&\cdot\\{}&\cdot &{}\\ \cdot&{}&{} \end{array}$} (23);  
\draw[main_node]  (9) edge node{} (13);  
\draw[main_node]  (10) edge node{} (14);   

 \draw[main_node]  (7) edge node{} (30);  
\draw[main_node]  (2) edge node{} (31);   
\draw[main_node]  (1) edge node{} (32);   
\draw[main_node]  (14) edge node{} (33);   
\draw[main_node]  (10) edge node{} (34);   
\draw[main_node,color=red,very thick]  (3) edge node{} (35);   
\draw[main_node]  (4) edge node{} (36);    
\end{tikzpicture}
\caption{The canonical totally connected planar network $G_n$. The uppermost path~$U_{n-1,2}$ from source node $n-1$ to target node $2$ is shown in  red (bold line). The path with tropical weight $w_{n,n-1}+w_{n-1,n-1}+w_{n-1,n}$ is shown in blue (dashed line).}
\label{n}
\end{figure}

\begin{df} \label{d-trapeze}
  We say that the weights of $G_n$ satisfy the (strict)
  \textit{trapeze inequality} if
$$w_{i,i}>w_{i,i-1}+w_{i-1,i-1}+w_{i-1,i},\ \forall i=2,\dots n.$$
 This is illustrated  in \Cref{fig-trapeze}.  Note that the trapezes occurring in these inequalities are  central, between consecutive levels, and minimal.
 \begin{figure}[htbp]
 \begin{center}
  \begin{tikzpicture}[main_node/.style={circle,fill=black,minimum 
size=0.05em,inner sep=1pt]}]
 \node[main_node] (1) at (0,0) {};
    \node[main_node] (2) at (-1,1)  {};
    \node[main_node] (3) at (2,1) {};
    \node[main_node] (4) at (1,0) {};
    \node[main_node] (5) at (-1,0) {};
    \node[main_node] (6) at (2,0) {};
    \draw[main_node,color=blue,very thick,left,dashed]  (1) edge node{$ w_{i,i-1}$} (2);
    \node (dummy) at (0.5,0.5) {$\vee$};
 \draw[main_node,color=red,very thick,above]  (2) edge node{$\begin{array}{c}w_{i,i}\end{array}$} (3);  
 \draw[main_node,color=blue,very thick,right,dashed]  (3) edge node{$\ w_{i-1,i}$} (4);    
 \draw[main_node]  (5) edge node{ } (1);    
 \draw[main_node]  (6) edge node{ } (4);    
 \draw[main_node,color=blue,very thick,below,dashed]  (1) edge node{$w_{i-1,i-1}$} (4);
  \end{tikzpicture}
 \end{center}
 \caption{The trapeze inequality. The (tropical) weight of the path in blue (dashed path) is dominated by the tropical weight of the path in red (bold path) with the same source and target. }\label{fig-trapeze}
 \end{figure}
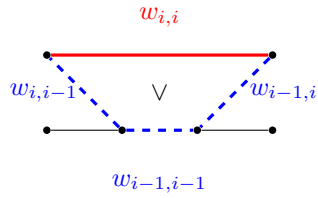

\end{df}
\begin{df}\label{d-para}
  We say that the weights of $G_n$ satisfy the (strict) \textit{parallelogram inequality}
if 
$$w_{i,1}<w_{i,2}<\dots<w_{i,i-2}<w_{i,i-1}\ \text{ and }\ w_{1,i}<w_{2,i}<\dots<w_{i-2,i}<w_{i-1,i},\ \forall i=2,\dots n.$$
In other words, the weights are increasing on every level, as one moves from the left or from the right of the planar network towards the center. This is illustrated in \Cref{fig-para}. 

We shall say that the weights satisfy the {\em weak} trapeze or parallelogram inequalities if the conditions in Definitions \ref{d-trapeze} and \ref{d-para} involving strict inequalities are replaced by non strict ones.

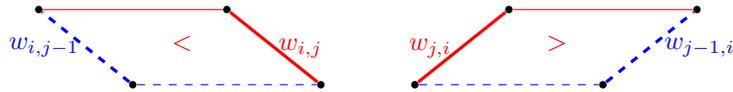
\begin{figure}[htbp]
\begin{center}
  \begin{tikzpicture}[main_node/.style={circle,fill=black,
minimum size=0.05em,inner sep=1pt]}]
    \node[main_node] (25) at (-3.75,5) {};
    \node[main_node] (21) at (-2.5,4) {};
    \node[main_node] (26) at (-1.25,5) {};
   \node[main_node] (22) at (0,4) {};
    \node[main_node] (24) at (1.25,4) {};
    \node[main_node] (27) at (2.5,5) {};
    \node[main_node] (23) at (3.75,4) {};
     \node[main_node] (28) at (5,5) {};
\draw[main_node,color=blue,very thick,left,dashed]  (21) edge node{$w_{i,j-1}$} (25);  
\draw[main_node,color=red,very thick]  (22) edge node{$<\ \ \ \ \ \ \ \ \ w_{i,j}\ \ \ \ \ \ $} (26);  
\draw[main_node,color=blue,very thick,right,dashed]  (23) edge node{$\ w_{j-1,i}$} (28);  
\draw[main_node,color=red,very thick]  (24) edge node{$\ \ \ \ \ \ w_{j,i}\ \ \ \ \ \ \ \ \ \ >$} (27);  
\draw[main_node,color=blue,dashed]  (24) edge node{} (23); 
\draw[main_node,color=blue,dashed]  (21) edge node{} (22); 
\draw[main_node,color=red]  (28) edge node{} (27); 
\draw[main_node,color=red]  (26) edge node{} (25); 
\end{tikzpicture}
\end{center}
\caption{The parallelogram inequalities. The (tropical) weight of the path in blue (dashed path) is dominated by the weight of the path in red with the same endpoints.}\label{fig-para}
\end{figure}
\end{df}

\begin{pro}\label{rltn}
  The weights of the
  canonical totally connected planar network $G_n$ (in Figure~\ref{n})
  satisfy the  weak (resp.\  strict) trapeze and parallelogram inequalities
if and only if the uppermost path is a path (resp.\ the  only path)
  of maximal tropical weight from $i$ to $j$ in $G_n$, for every $i,j$.\end{pro}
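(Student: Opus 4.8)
The plan is as follows. First I would recall the structure of $G_n$ from Figure~\ref{n}: source $i$ and target $j$ sit at height $i-1$ and $j-1$; the central horizontal edge at height $k-1$ carries weight $w_{k,k}$; the diagonal edges in the left half carry the weights $w_{p,q}$ with $p>q$ (each dropping one height as one moves rightward) and those in the right half carry $w_{p,q}$ with $p<q$ (each rising one height); all unlabelled edges carry weight $\unit$. From this one reads off the basic path structure: a path from source $i$ to target $j$ uses exactly one central edge $w_{c,c}$ with $c\le\min(i,j)$, and decomposes as a descent on the left from height $i-1$ down to height $c-1$, then $w_{c,c}$, then an ascent on the right from height $c-1$ up to height $j-1$; the uppermost path is the extreme such path, namely $c=\min(i,j)$ with its diagonal part running as close to the central axis as the endpoints allow, so that its tropical weight is indeed $\UM_{i,j}$. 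I would also use the reflection of $G_n$ across its vertical axis, which swaps source $i$ with target $i$, turns $w_{p,q}$ into $w_{q,p}$, and sends the uppermost path from $i$ to $j$ to the uppermost path from $j$ to $i$ (note $\UM_{i,j}$ and $\UM_{j,i}$ correspond under $w_{p,q}\leftrightarrow w_{q,p}$); this symmetry fixes the family of trapeze inequalities and exchanges the two families of parallelogram inequalities.

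For necessity I would exhibit, for each inequality, a competing path of the prescribed weight. For the trapeze inequality at $i$: the uppermost path from $i$ to $i$ is the direct path, of weight $\UM_{i,i}=w_{i,i}$, and it competes with the minimal central detour of Figure~\ref{fig-trapeze} (descend one height via $w_{i,i-1}$, cross via $w_{i-1,i-1}$, ascend via $w_{i-1,i}$), which is a distinct path from source $i$ to target $i$ of weight $w_{i,i-1}+w_{i-1,i-1}+w_{i-1,i}$; maximality (resp.\ strict maximality) of the uppermost path then yields the weak (resp.\ strict) trapeze inequality. For the parallelogram inequality $w_{i,m-1}\le w_{i,m}$ with $2\le m\le i-1$: compare the uppermost path from $i$ to $m$, of weight $\UM_{i,m}=w_{m,m}+w_{m+1,m}+\dots+w_{i,m}$, with the path that performs the first step of that descent one diagonal further out (using $w_{i,m-1}$ in place of $w_{i,m}$, plus one unit edge, exactly as in Figure~\ref{fig-para}) and then rejoins it, of weight $\UM_{i,m}-w_{i,m}+w_{i,m-1}$; comparing weights gives $w_{i,m-1}\le w_{i,m}$ (resp.\ strictly). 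The second family of parallelogram inequalities follows by applying the reflection symmetry.

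For sufficiency I would argue by straightening. The key combinatorial claim, read off from the structure of $G_n$, is that every path $p$ from $i$ to $j$ can be transformed into the uppermost path by a finite sequence of elementary moves, each being a \emph{trapeze move} (replacing a minimal central $\vee$-detour $w_{k,k-1},\,w_{k-1,k-1},\,w_{k-1,k}$ by the single central edge $w_{k,k}$, cf.\ Figure~\ref{fig-trapeze}) or a \emph{parallelogram move} (swapping a diagonal edge for the adjacent, more central diagonal edge, together with an adjacent unit edge, cf.\ Figure~\ref{fig-para}): parallelogram moves push the diagonal part of $p$ onto the innermost admissible diagonals, after which any excursion of $p$ below height $\min(i,j)-1$ contains a minimal central $\vee$-detour removable by a trapeze move (which raises the central index $c$ by one); termination follows since the pair (number of edges of $p$, total distance of its diagonal edges from the axis) strictly decreases lexicographically at each move. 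Now, under the weak (resp.\ strict) inequalities, a trapeze move does not decrease (resp.\ strictly increases) the tropical weight because $w_{k,k}\ge w_{k,k-1}+w_{k-1,k-1}+w_{k-1,k}$ (resp.\ $>$), and a parallelogram move does not decrease (resp.\ strictly increases) it because it replaces a diagonal weight $w_{p,q}$ by an adjacent one $w_{p,q'}$ with $w_{p,q}\le w_{p,q'}$ (resp.\ $<$), which is one of the parallelogram inequalities. Hence every path from $i$ to $j$ has tropical weight at most $\UM_{i,j}$, with equality (in the strict case) only for the uppermost path itself, which is exactly the assertion.

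The hard part will be the combinatorial claim underlying sufficiency — that the uppermost path is reachable from any path by trapeze and parallelogram moves and that this terminates. This requires a careful bookkeeping of which descents and ascents are admissible in $G_n$, i.e.\ which diagonal can follow which between consecutive heights, but it is elementary once the explicit shape of $G_n$ in Figure~\ref{n} is pinned down.
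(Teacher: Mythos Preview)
Your proposal is correct and follows essentially the same approach as the paper's proof: for the direction ``inequalities $\Rightarrow$ uppermost is optimal'' you use the same mutation/straightening argument (trapeze moves and parallelogram moves pushing any path to the uppermost one while not decreasing, resp.\ strictly increasing, the weight), and for the converse you exhibit, for each inequality, a specific competing path against the relevant uppermost path. The only cosmetic differences are that you derive the first family of parallelogram inequalities explicitly and invoke the left/right reflection for the second (the paper does the opposite), and that you supply a termination argument for the mutation procedure via a lexicographic potential, which the paper leaves implicit.
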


\begin{proof} We prove only the equivalence in the case of strict inequalities. The case of  weak inequalities is similar.
 
\underline{$\Rightarrow$ :}
  Consider an arbitrary path $\pi$ from a source $i$ to a target $j$.
  The strict trapeze and parallelogram inequalities allow us to perform
  mutation operations, which replace a path $\pi$ by another path with
  the same endpoints and with a larger (tropical) weight.
  For instance, if the path $\pi$ contains as a subpath
  the path with weight $w_{i,i-1}+ w_{i-1,i-1}+ w_{i-1,i}$
  shown in \Cref{fig-trapeze} (in dashed blue), replacing this subpath by the path with weight $w_{i,i}$ (in red) yields a path $\pi'$ with the announced
  property. Similarly, if $\pi$ contains as a subpath one of the paths
  in dashed blue in the parallelograms shown in \Cref{fig-para}, replacing this subpath by the opposite path in the parallelogram (in red), we end up again
  with a path $\pi'$ with the announced properties. Carrying out these
  operations until no mutation of the path is possible, we arrive
  at a path from $i$ to $j$, which is necessarily a uppermost path,
  and whose weight dominates the weight of $\pi$.
  This mutation procedure is illustrated in \Cref{fig-mutation}.

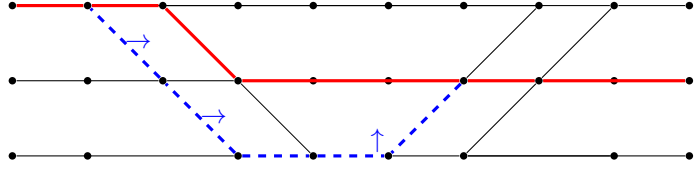
\begin{figure}[htbp]\begin{tikzpicture}[main_node/.style={circle,
fill=black,minimum size=0.05em,inner sep=1pt]}]
    \node[main_node] (1) at (-0.5,0) {};
    \node[main_node] (2) at (-1.5,1)  {};
    \node[main_node] (3) at (3.5,1) {};
    \node[main_node] (4) at (2.5,0) {};
    \node[main_node] (41) at (0.5,0) {};
    \node[main_node] (44) at (1.5,0) {};
\node[main_node] (42) at (0.5,1) {};
    \node[main_node] (43) at (1.5,1) {};
\node[main_node] (45) at (0.5,2) {};
    \node[main_node] (46) at (1.5,2) {};
    \node[main_node] (5) at (-0.5,1) {};
    \node[main_node] (6) at (2.5,1) {};
    \node[main_node] (7) at (-2.5,2) {};
    \node[main_node] (8) at (-1.5,2) {};
    \node[main_node] (9) at (3.5,2) {};
    \node[main_node] (10) at (4.5,2) {};
    \node[main_node] (11) at (-0.5,2) {};
    \node[main_node] (12) at (2.5,2) {};
    \node[main_node] (23) at (-3.5,2) {};
    \node[main_node] (13) at (-3.5,0) {};
    \node[main_node] (14) at (4.5,0) {};
    \node[main_node] (15) at (-3.5,1) {};
    \node[main_node] (16) at (4.5,1) {};
    \node[main_node] (22) at (-2.5,1) {};
    \node[main_node] (21) at (-2.5,0) {};
    \node[main_node] (31) at (5.5,2) {};
    \node[main_node] (32) at (5.5,1) {};
    \node[main_node] (33) at (5.5,0) {};
 \draw[main_node,color=blue,dashed,very thick]  (1) edge (41);

 \draw[main_node,color=blue,dashed,very thick]  (41) edge node{$\begin{array}{ccc}&&\uparrow\\&&\end{array}$} (44);
 \draw[main_node,color=blue,dashed,very thick]  (1) edge node{$\ \ \ \rightarrow$} (2);   
 \draw[main_node,color=red,very thick]  (6) edge node{$\ \ \ $} (5);   
 \draw[main_node]  (3) edge node{$\ \ \ \ \ \ \ $} (4);    

\draw[main_node]  (2) edge node{} (5);  
\draw[main_node]  (4) edge node{} (44);  
\draw[main_node]  (3) edge node{} (6);  
 \draw[main_node,color=blue,dashed, very thick]  (2) edge node{$\ \ \ \rightarrow$} (7);
 \draw[main_node,color=red,very thick]  (5) edge node{$\ \ \ $} (8);

 \draw[main_node]  (6) edge node{$\ \ \ \ \ \ \  $} (9);
 \draw[main_node]  (3) edge node{$\ \ \ \ \ \ \ $} (10);
\draw[main_node]  (7) edge node{} (10);  
\draw[main_node,color=red,very thick]  (7) edge node{} (8);  
\draw[main_node]  (13) edge node{} (1);
  \draw[main_node]  (14) edge node{} (4);
  \draw[main_node]  (15) edge node{} (2);
  \draw[main_node]  (16) edge node{} (3);  
  \draw[main_node,color=red,very thick]  (23) edge node{} (7);  
  \draw[main_node]  (10) edge node{} (31);  
  \draw[main_node,color=red,very thick]  (3) edge node{} (32);  

  \draw[main_node]  (4) edge node{} (33);  
\draw[main_node]  (8) edge node{$\begin{array}{c}\\ {}\end{array}$} (9);  

\draw[main_node,color=blue,dashed,very thick]  (6) edge node{} (44);  
\draw[main_node,color=red,very thick]  (3) edge node{} (6);  

 \draw[main_node,very thin]  (41) edge node{} (5);   
  \end{tikzpicture}
  \caption{Applying two parallelogram mutations followed by one trapeze mutation to the path $3\to 1$ (dashed blue) yields a uppermost path with the same endpoints and a larger (tropical) weight (bold red).}\label{fig-mutation}
\end{figure}

\underline{$\Leftarrow$ :} 
  Our assumption entails that
  the uppermost path with weight $w_{i,i}$  is the  only  path of maximal weight from $i$ to $i$, for every $i$, and therefore 
  $$\UM_{i,i}=w_{i,i}>w_{i,i-1}+w_{i-1,i-1}+w_{i-1,i},\ \forall i,$$
  showing that the strict trapeze inequality holds.

  Let us now assume that $i<j$.
  The uppermost path $\UM_{i,j}=\sum_{t=i}^j w_{i,t}$
  is the  only  maximal path from $i$ to $j$. 
Therefore 
$$\UM_{i,j}=\sum_{t=i}^j w_{i,t}=w_{i,j}+\sum_{t=i}^{j-1} w_{i,t}>w_{i-1,j}+\sum_{t=i}^{j-1} w_{i,t}=w_{i-1,j}+\UM_{i,j-1}\ \ \forall i<j,$$   which implies 
$w_{i,j}>w_{i-1,j}\ \forall i<j$, showing that the second strict parallelogram inequality in \Cref{fig-para} holds. The first of these strict parallelogram
inequalities is obtained by a symmetrical argument, assuming
that $i>j$.

\end{proof}

In the next definition, the map $\psi$ is constructed so that $\psi(W)_{ij}$
is the weight of the uppermost path from $i$ to $j$. We shall see
that $\psi$ is invertible, and provide an explicit expression
for its inverse, $\phi$.

\begin{df}
The map $\psi:\R^{n\times n}\mapsto\R^{n\times n}$ is defined by
$$\psi(W)=(\psi(W)_{i,j})\ :\ 
\psi(W)_{i,j}=\begin{cases}
\sum_{t=i}^j w_{i,t},&\ i\leq j\\
\sum_{t=j}^i w_{t,j},&\ i> j\enspace ,
\end{cases}$$
for  every $n\times n$ matrix $W=(w_{i,j})$.
The map $\phi:\R^{n\times n}\mapsto\R^{n\times n}$ is defined by
$$\phi(A)=(\phi(A)_{i,j})\ :\ 
\phi(A)_{i,j}=\begin{cases}
a_{i,j},&\ i=j\\
a_{i,j}-a_{i,j-1},
&\ i< j,\\ 
a_{i,j}-a_{i-1,j},&  i>j\enspace ,\end{cases}$$ 
for  every $n\times n$ matrix $A=(a_{i,j})$.
\end{df}

\begin{pro}\label{ID}
The maps $\phi$ and $\psi$ satisfy
$\psi\circ\phi(A)=A$ and~$\phi\circ\psi(W)=W$
for all matrices $A,W\in\R^{n\times n}$.
\end{pro}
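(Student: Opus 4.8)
The plan is to prove both identities by direct telescoping computations, handling separately the diagonal, strictly-upper, and strictly-lower cases, and using the symmetry under transposition (both $\psi$ and $\phi$ commute with $W\mapsto W^{\T}$, which merely exchanges the upper and lower branches of the two piecewise definitions) to reduce the work to the diagonal and upper cases only.

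For $\psi\circ\phi(A)=A$, I would set $B=\phi(A)$ and fix indices with $i\le j$. In the sum $\psi(B)_{i,j}=\sum_{t=i}^{j}b_{i,t}$ the first term is $b_{i,i}=a_{i,i}$, while for every $t$ with $i<t\le j$ the row index $i$ is strictly smaller than the column index $t$, so $b_{i,t}=a_{i,t}-a_{i,t-1}$. Hence
\[
\psi(B)_{i,j}=a_{i,i}+\sum_{t=i+1}^{j}\bigl(a_{i,t}-a_{i,t-1}\bigr)=a_{i,i}+\bigl(a_{i,j}-a_{i,i}\bigr)=a_{i,j}\enspace ,
\]
the inner sum telescoping. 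The case $i>j$ is treated identically, with rows and columns interchanged.

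For $\phi\circ\psi(W)=W$, I would set $A=\psi(W)$. The diagonal case is immediate: $a_{i,i}=w_{i,i}$ and $\phi(A)_{i,i}=a_{i,i}=w_{i,i}$. For $i<j$ one has $i\le j-1$, so both $a_{i,j}$ and $a_{i,j-1}$ are given by the first branch of the definition of $\psi$, and therefore
\[
\phi(A)_{i,j}=a_{i,j}-a_{i,j-1}=\sum_{t=i}^{j}w_{i,t}-\sum_{t=i}^{j-1}w_{i,t}=w_{i,j}\enspace .
\]
Again the case $i>j$ is symmetric: now $i-1\ge j$, so $a_{i,j}$ and $a_{i-1,j}$ both come from the second branch of $\psi$, and their difference is $w_{i,j}$.

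I do not expect a genuine obstacle: the computation is elementary. The only point deserving minor care is checking that the index shifts used in $\phi$ (namely $j\mapsto j-1$ when $i<j$, and $i\mapsto i-1$ when $i>j$) keep the entry in the range where the relevant branch of the piecewise formula for $\psi$ applies — which is exactly guaranteed by the strict inequalities $i<j$, resp.\ $i>j$, under which those shifts occur.
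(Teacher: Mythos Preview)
Your proposal is correct and follows essentially the same approach as the paper: both arguments reduce to direct telescoping computations, splitting into the diagonal, strictly-upper, and strictly-lower cases, with the lower case handled by symmetry. The only cosmetic differences are that you treat the two compositions in the opposite order and make explicit the check that the shifted indices $(i,j-1)$ and $(i-1,j)$ fall into the correct branch of the piecewise definition of $\psi$, a point the paper leaves implicit.
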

\begin{proof}
  Suppose that $A=\psi(W)$.
  Then,
  $\phi(A)_{i,i}=a_{i,i}=w_{ii}$.
  Moreover, for $i<j$,
  $\phi(A)_{i,j}=a_{i,j}-a_{i,j-1}
  =\sum_{t=i}^j w_{i,t}-\sum_{t=i}^{j-1} w_{i,t}= 
  w_{i,j}$. A dual
  calculus applies to the case $i>j$. We deduce
  that $\phi\circ\psi(W)=W$.
  
  Suppose now that $W=\phi(A)$.
  Then, for $i\leq j$,
  $\psi(W)_{ij}=
  \sum_{t=i}^j w_{i,t}
  =w_{i,i}+
  \sum_{t=i+1}^j w_{i,t}$.
  If $i=j$, we deduce that $\psi(W)_{ii}=w_{ii}=a_{ii}$.
  If $i<j$, we get the telescopic sum
  $\psi(W)_{ij}=a_{i,i}+ \sum_{t=i+1}^i(a_{i,t}-a_{i,t-1})=a_{i,j}$.
  The case in which $j>i$ is dual. We deduce that
  $\psi\circ\phi(A)=A$.
  
\end{proof}
We shall   need the following observation.

\begin{pro}\label{phipsi}
  If $A\in \TN^\trop(\R)$ (resp.\ $A\in \TP^\trop$),
  the weights $W:=\phi(A)$ satisfy the weak (resp.\ strict)
  trapeze
  and parallelogram inequalities.
\end{pro}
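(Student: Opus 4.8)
The plan is to unfold the definition of $\phi$ in each inequality appearing in \Cref{d-trapeze} and \Cref{d-para}, and to recognize the resulting condition on the entries of $A$ as the tropical positivity (in the strict case) or tropical nonnegativity (in the weak case) of a suitable $2\times 2$ minor of $A$, always on two consecutive rows and two consecutive columns. The conclusion then follows at once: if $A\in\TP^\trop$ then all its minors, in particular all $2\times 2$ minors, are tropically positive; and if $A\in\TN^\trop(\R)=\TN_2^\trop(\R)$ (using the result of \cite{GN17} recalled above) then all $2\times 2$ minors are tropically nonnegative.

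First I would handle the trapeze inequality. Fix $i\in\{2,\dots,n\}$. Since $i>i-1$, the definition of $\phi$ gives $w_{i,i-1}=a_{i,i-1}-a_{i-1,i-1}$, and since $i-1<i$ it gives $w_{i-1,i}=a_{i-1,i}-a_{i-1,i-1}$; also $w_{i,i}=a_{i,i}$ and $w_{i-1,i-1}=a_{i-1,i-1}$. Substituting into $w_{i,i}>w_{i,i-1}+w_{i-1,i-1}+w_{i-1,i}$ and cancelling the term $a_{i-1,i-1}$, one obtains the equivalent inequality $a_{i-1,i-1}+a_{i,i}>a_{i-1,i}+a_{i,i-1}$, which is precisely the tropical positivity of the principal $2\times 2$ minor of $A$ on rows and columns $\{i-1,i\}$ (identity beats transposition); the weak trapeze inequality corresponds to tropical nonnegativity of the same minor.

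Next the parallelogram inequalities, which split into the two monotone chains of \Cref{d-para}. For the chain $w_{i,1}<\dots<w_{i,i-1}$, fix $i$ and $s$ with $1\le s\le i-2$; here $w_{i,s}=a_{i,s}-a_{i-1,s}$ and $w_{i,s+1}=a_{i,s+1}-a_{i-1,s+1}$ (both in the branch ``row index $>$ column index''), so $w_{i,s}<w_{i,s+1}$ rewrites as $a_{i-1,s}+a_{i,s+1}>a_{i-1,s+1}+a_{i,s}$, i.e.\ the tropical positivity of the $2\times 2$ minor on rows $\{i-1,i\}$ and columns $\{s,s+1\}$. Symmetrically, for the chain $w_{1,i}<\dots<w_{i-1,i}$, fix $i$ and $t$ with $1\le t\le i-2$ and use $w_{t,i}=a_{t,i}-a_{t,i-1}$, $w_{t+1,i}=a_{t+1,i}-a_{t+1,i-1}$; then $w_{t,i}<w_{t+1,i}$ rewrites as the tropical positivity of the $2\times 2$ minor on rows $\{t,t+1\}$ and columns $\{i-1,i\}$. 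The weak versions again correspond to tropical nonnegativity of these same minors.

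Having identified every inequality of \Cref{d-trapeze} and \Cref{d-para} with the tropical positivity (resp.\ nonnegativity) of a $2\times 2$ minor of $A$, the proposition follows directly from the hypothesis $A\in\TP^\trop$ (resp.\ $A\in\TN^\trop(\R)$). There is no real obstacle beyond index bookkeeping: one must keep track of which branch of the definition of $\phi$ applies to each weight $w_{p,q}$ according to whether $p<q$, $p=q$ or $p>q$, and check the index ranges, noting in particular that for $i=2$ the parallelogram chains reduce to a single entry and impose no condition.
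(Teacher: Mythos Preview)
Your proof is correct and follows essentially the same approach as the paper: you substitute the definition of $\phi$ into each trapeze and parallelogram inequality and identify the resulting condition as the tropical positivity (resp.\ nonnegativity) of a $2\times 2$ minor of $A$ on consecutive rows and columns. The only cosmetic difference is indexing (the paper writes $w_{i,j}-w_{i,j-1}$ for $i>j$ rather than your $w_{i,s}<w_{i,s+1}$), and your invocation of $\TN^\trop(\R)=\TN_2^\trop(\R)$ is not actually needed since membership in $\TN^\trop(\R)$ already gives nonnegativity of all minors.
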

\begin{proof}
  Suppose that $W=\phi(A)$, where $A\in\TN^\trop(\R)$.

  Then, $w_{i,i}-w_{i,i-1}-w_{i-1,i-1}-w_{i-1,i}
  =a_{i,i}-(a_{i,i-1}-a_{i-1,i-1})-a_{i-1,i-1}-(a_{i-1,i}-a_{i-1,i-1})
  =a_{i,i}+a_{i-1,i-1}-a_{i-1,i}-a_{i,i-1}\geq 0$, because
  the principal minor of the matrix $A$ with indices
  $\{i-1,i\}$ is tropically nonnegative.

  Suppose now that $i> j$. Then,
  $w_{i,j}-w_{i,j-1}=a_{i,j}-a_{i-1,j}-(a_{i,j-1}-a_{i-1,j-1})
  \geq 0$, because the $\{i-1,i\}\times \{j-1,j\}$
  minor of $A$ is is tropically nonnegative.

  A dual argument applies to the situation
  in which $i<j$. We conclude that $W$ satisfies the weak trapeze
  and parallelogram inequalities. 
  If $A\in \TP^\trop$, we derive the strict trapeze
  and parallelogram inequalities along the same lines.
  \end{proof}
The following proposition holds for any totally connected planar network (not only for the canonical one). It is deduced in~\cite{GN17} from the classical result showing that the ordinary transfer matrix of a totally connected planar network is totally positive, see e.g.~ \cite[Corollary 2]{F&Z}. Observe that in the tropical setting, we only get total {\em nonnegativity}. This is because when
applying the nonarchimedean valuation to a strict inequality, we only
derive a weak inequality.
\begin{pro}[{\cite[Corollary~7.9]{GN17}}]
  \label{PNTN}
  The tropical transfer matrix of a planar network $G$ is tropical totally nonnegative. In particular if $G$ is totally 
connected then its transfer matrix is in $\TN^\trop(\R)$.
\end{pro}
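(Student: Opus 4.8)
The plan is to lift the tropical planar network $G$ to a planar network over the Puiseux field $\K$, invoke the classical fact (Lindström--Gessel--Viennot) that the transfer matrix of a planar network with positive weights is totally nonnegative, and then pull the conclusion back by applying the valuation. Concretely, I would replace each real edge weight $w_e$ of $G$ by the monomial $t^{w_e}\in\Kpos$ (using that the value group of $\K$ is $\R$), obtaining a planar network $\hat G$ over $\K$ with classical transfer matrix $\puiseuxA$. For a source $i$ and a target $j$, the entry $\puiseuxA_{i,j}$ is the sum over all paths $\pi$ from $i$ to $j$ of $\prod_{e\in\pi}t^{w_e}=t^{\,\text{tropical weight of }\pi}$, a sum of positive terms; hence either there is no such path and $\puiseuxA_{i,j}=0$ while the tropical entry $A_{i,j}=-\infty$, or $\puiseuxA_{i,j}$ is a positive element of $\K$ with $\val(\puiseuxA_{i,j})$ equal to the maximal tropical path weight, i.e.\ to $A_{i,j}$. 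In both cases $\val(\puiseuxA_{i,j})=A_{i,j}$, and $\puiseuxA$ has entries in $\Knneg$.

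The next step would be to apply the classical Lindström--Gessel--Viennot lemma for planar networks (see e.g.~\cite[Corollary~2]{F&Z}): because the sources and targets are numbered bottom to top, every vertex-disjoint path family joining a set of sources to a set of targets is non-crossing, so every minor of $\puiseuxA$ equals a sum, with all plus signs, of products of (positive) edge weights over such families; thus every minor of $\puiseuxA$ is $\geq 0$. To transfer this tropically, fix a square submatrix $A_{I,J}$ with $|I|=|J|$. If $A_{I,J}$ is tropically sign-singular, the tropical minor $\per(A_{I,J})$ is tropically nonnegative by definition. Otherwise $\per(A_{I,J})\neq-\infty$, so by the facts recalled in \S2 we have $\val(\det\puiseuxA_{I,J})=\per(A_{I,J})$, whence $\det\puiseuxA_{I,J}\neq 0$; together with $\det\puiseuxA_{I,J}\geq 0$ this gives $\det\puiseuxA_{I,J}>0$, so $\sgn(\det\puiseuxA_{I,J})=+$, which is the common sign of the maximal-weight permutations in $\per(A_{I,J})$; hence those permutations are even and $\per(A_{I,J})$ is tropically positive. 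Either way $\per(A_{I,J})$ is tropically nonnegative, so $A\in\TN^\trop$. For the ``in particular'' clause, if $G$ is totally connected then applying total connectivity to $I=\{i\}$, $J=\{j\}$ produces a path from $i$ to $j$ for every $i,j$, so all entries of $A$ are finite and $A\in\TN^\trop(\R)$.

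I expect the only delicate points to be bookkeeping rather than substance: first, checking that the prescribed bottom-to-top numbering really does put the network in the ``non-permutable'' position required for the Lindström--Gessel--Viennot sign argument (so that all disjoint path families contribute with sign $+1$), and second, the sign-singular/sign-nonsingular case split used to pass from $\det\puiseuxA_{I,J}\ge 0$ to tropical nonnegativity of $\per(A_{I,J})$. It is precisely at this last step that strictness is irrecoverably lost: classically all minors of $\puiseuxA$ may be strictly positive, but their valuations are arbitrary reals, so one cannot conclude that any tropical minor is tropically \emph{positive}; this is exactly the degeneration anticipated before the statement, and it is why total connectedness only yields membership in $\TN^\trop(\R)$ rather than in $\TP^\trop$.
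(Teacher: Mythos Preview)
Your proposal is correct and is precisely the argument the paper has in mind: the paper does not prove this proposition but cites it from \cite[Corollary~7.9]{GN17}, noting just before the statement that it is ``deduced in~\cite{GN17} from the classical result showing that the ordinary transfer matrix of a totally connected planar network is totally positive, see e.g.~\cite[Corollary~2]{F&Z}.'' Your lift-to-$\K$ (via $w_e\mapsto t^{w_e}$) and pull-back-by-valuation strategy is exactly this deduction spelled out, including the sign-singular/sign-nonsingular case split that the background material in \S2 is set up to support.
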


In the following theorem,
 considering the canonical totally connected
 planar network~$G_n$ shown in \Cref{n},
 we refine the result of Proposition~\ref{PNTN}  
by characterizing the network weights that corresponds to
tropical totally {\em positive} matrices.

\begin{thm}\label{PN}Let~$G_n$ be the planar network
with weights $w_{ij}$, as in Figure~\ref{n}. 
\begin{enumerate}

\item  If for every~$i,j\in[n]$ the  only  path of maximal weight in~$G_n$ is $\UM_{i,j}$,  then the  transfer
 matrix of $G_n$ is in~$\TP^\trop$.

\item\label{(2)} For every square matrix~$ A\in \TN^\trop(\mathbb{R})$ there exists a 
choice of  matrix of weights $W=(w_{ij})$, such that~$A$ is the transfer matrix
of $W$. This choice becomes unique if we require in addition that the weights $w_{i,j}$ satisfy the weak parallelogram and trapeze inequalities.

\item If~$A\in \TP^\trop$ then there exists a unique choice of  matrix of weights $W$ such that $A$ is the transfer matrix of $W$.

\end{enumerate}
\end{thm}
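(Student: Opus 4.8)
\textbf{Proof plan for \Cref{PN}.}
The strategy is to deduce all three parts from the preparatory results already in hand, chiefly \Cref{rltn}, \Cref{ID}, \Cref{phipsi} and \Cref{PNTN}, so that almost no new computation is required. The key bridge is the observation that, for the canonical network $G_n$, the entry $\psi(W)_{ij}$ is exactly the tropical weight of the uppermost path $\UM_{i,j}$, and---crucially---that whenever the trapeze and parallelogram inequalities hold, \Cref{rltn} tells us that this uppermost path is (uniquely) of maximal tropical weight, hence the transfer matrix of $G_n$ with weights $W$ is precisely $\psi(W)$. So the combinatorial content reduces to the bijection $\psi \leftrightarrow \phi$ together with the dictionary ``TN$^\trop$/TP$^\trop$ $\leftrightarrow$ weak/strict inequalities.''

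For part (1): assuming the only maximal-weight path between every $i,j$ is $\UM_{i,j}$, \Cref{rltn} gives that $W$ satisfies the \emph{strict} trapeze and parallelogram inequalities, and also that the transfer matrix equals $\psi(W)$. One then has to check $\psi(W)\in\TP^\trop$. The cleanest route is to invoke \Cref{PNTN} to get $\psi(W)\in\TN^\trop(\R)$, and then upgrade to strict positivity of all minors by a mutation/uncrossing argument: any family of vertex-disjoint paths realizing a minor of $\psi(W)$ must, by the strict inequalities, be the ``non-crossing'' family of uppermost paths, so the optimal assignment for that minor is unique, forcing the minor to be tropically positive rather than merely nonnegative. (Alternatively, one argues directly that $\psi$ maps the strict-inequality cone into $\TP^\trop$ by reversing the minor-by-minor computation in \Cref{phipsi}.) For part (2): given $A\in\TN^\trop(\R)$, set $W:=\phi(A)$. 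By \Cref{phipsi}, $W$ satisfies the weak trapeze and parallelogram inequalities, so by \Cref{rltn} the uppermost path is of maximal tropical weight for every $i,j$; hence the transfer matrix of $G_n$ with weights $W$ is $\psi(W)$, which by \Cref{ID} equals $A$. This gives existence. For uniqueness among weights satisfying the weak inequalities: if $W'$ is any such weight matrix with transfer matrix $A$, then by \Cref{rltn} again the transfer matrix equals $\psi(W')$, so $\psi(W')=A=\psi(W)$, and applying $\phi$ and using \Cref{ID} yields $W'=W$. (Without the weak-inequality requirement, one exhibits the non-uniqueness noted in the introduction, e.g. by adding slack to a ``dominated'' weight that never participates in a maximal path, so the qualifier is genuinely needed.) For part (3): apply part (2) to get existence of a $W$ with the weak inequalities; since $A\in\TP^\trop\subset\TN^\trop(\R)$, \Cref{phipsi} actually gives that $W=\phi(A)$ satisfies the \emph{strict} inequalities, so by \Cref{rltn} the uppermost path is the \emph{unique} maximal-weight path for every $i,j$. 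Now if $W'$ is \emph{any} weight matrix (no inequality assumed) whose transfer matrix is $A$, one shows $W'$ must already satisfy the weak inequalities: the principal $1\times1$ and $2\times2$ minors of $A$ are tropically positive, and in $G_n$ the path of weight $w'_{i,i}$ competes only with the trapeze path of weight $w'_{i,i-1}+w'_{i-1,i-1}+w'_{i-1,i}$ for the $(i,i)$ entry of the transfer matrix, while the parallelogram paths are the two competitors controlling the relevant $2\times2$ minors; positivity of these small minors of $A$ forces the corresponding strict inequalities on $W'$. Hence $W'$ satisfies the weak inequalities and part (2) forces $W'=W$.

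The main obstacle is the uniqueness claim in part (3) \emph{without} assuming inequalities on $W'$: one must argue that \emph{every} weight matrix producing a TP$^\trop$ transfer matrix is automatically forced (by the positivity of suitable small minors of $A$, read off through the competing paths in $G_n$) to obey the weak trapeze and parallelogram inequalities, at which point part (2) closes the argument. This requires a careful bookkeeping of which paths of $G_n$ contribute to the relevant entries and $2\times 2$ minors of the transfer matrix, and in particular that no ``extra'' path of $G_n$ can tie or beat the uppermost/trapeze/parallelogram paths without violating tropical positivity of a minor of $A$. The parts (1) and (2), and the existence half of (3), are essentially immediate assemblies of \Cref{rltn}, \Cref{ID}, \Cref{phipsi} and \Cref{PNTN}.
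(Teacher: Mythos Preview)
Your plan for part~(2) matches the paper's proof exactly, and your outline for part~(1) is close in spirit: the paper carries out your ``reversing \Cref{phipsi}'' alternative by invoking the characterization $A\in\TP^\trop \iff a_{i,j}+a_{i-1,j-1}>a_{i-1,j}+a_{i,j-1}$ for all $i,j$ (Theorem~3.4 and Lemma~3.2 of~\cite{GN17}) and checking these $2\times 2$ inequalities directly from the uppermost-path formulas together with the strict trapeze and parallelogram inequalities.

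The real divergence is in part~(3), and here your argument has a gap. Your claim that ``the path of weight $w'_{i,i}$ competes only with the trapeze path \dots\ for the $(i,i)$ entry of the transfer matrix'' is false for $i\geq 3$: already in $G_3$ the entry $A_{3,3}$ receives contributions from several distinct paths (see the explicit formula preceding~\eqref{WM}), including one descending all the way to level~$1$. So one cannot read off the trapeze inequality on $W'$ from the single minor $\{i-1,i\}\times\{i-1,i\}$ of $A$ without first controlling all the deeper paths---and that control is precisely what you are trying to establish. A similar circularity threatens your parallelogram step: until you know the nearer-to-center parallelogram inequalities hold, you do not know which paths realise the four entries of the $2\times 2$ minor you want to inspect.

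The paper sidesteps this by arguing contrapositively with a \emph{rank} argument. If the strict trapeze inequality fails at level $i$, then every maximal-weight path from a source $j\leq i$ to a target $k\leq i$ can be taken to pass through one of the central nodes $d_1,\dots,d_{i-1}$; hence $A_{[i],[i]}$ factors tropically as a product of an $i\times(i-1)$ matrix with an $(i-1)\times i$ matrix, forcing $A_{[i],[i]}$ to be tropically singular and contradicting $A\in\TP^\trop$. For the parallelogram inequality the paper locates the \emph{first} failing edge (scanning from the boundary inward), so that all nearer inequalities still hold; this pins down three of the four relevant entries as uppermost-path weights and exhibits an explicitly singular $2\times 2$ submatrix. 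Both devices---factoring through a smaller rank, and isolating a first failure---replace the local bookkeeping you propose and are the missing ingredients in your plan.
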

\begin{proof}

Denote by~$A=(a_{i,j})$  the transfer matrix of~$G_n$.

(1) Using Proposition~\ref{rltn}, the parallelogram and trapeze inequalities hold.
By Theorem~3.4 and Lemma~3.2 of~\cite{GN17}, the tropical total positivity of $A$ is equivalent to the strict inequalities
\begin{align}
  a_{i,j}+ a_{i-1,j-1}>a_{i-1,j}+ a_{i,j-1}\ \forall i,j\in[n]
  \enspace.\label{e-s-tocheck}
  \end{align}
By assumption, $a_{i-1,j-1}=\UM_{i-1,j-1}$, $a_{i,j-1}=\UM_{i,j-1}$.
Assume first that $i< j$. Then, considering the shape of uppermost paths,
we have $a_{i-1,j}=\UM_{i-1,j}=\UM_{i-1,j-1}+w_{i-1,j}$,
and similarly, $a_{i,j}=\UM_{i,j-1}+w_{i,j}$.
Using the parallelogram inequality,
we deduce that $a_{i,j}+ a_{i-1,j-1}- a_{i-1,j}+ a_{i,j-1}
= w_{ij}-w_{i-1,j}>0$, so that~\eqref{e-s-tocheck} holds
when $i<j$.
The situation in which $i>j$ is dual. 
If $i=j$, the identity $a_{ij}=\UM_{i,j-1}+w_{i,j}$
is replaced by $a_{ij}=a_{ii}=\UM_{ii}$.
We deduce that
$a_{i,j}+ a_{i-1,j-1}- a_{i-1,j}+ a_{i,j-1}
=\UM_{i,i}-\UM_{i,i-1}-w_{i-1,i}
= w_{ii}-w_{i,i-1}-w_{i-1,i-1}-w_{i-1,1}>0$
by the trapeze inequality, showing that~\eqref{e-s-tocheck}
holds in all cases.

(2) Consider $A\in \TN^\trop(\R)$, then by \Cref{phipsi} the matrix $W:=\phi(A)$ satisfies the weak trapeze and 
parallelogram inequalities. By~\Cref{rltn}, the $i,j$ entry of the transfer matrix $B$ arising  from the matrix of weights $W$   coincides with the weight 
$\UM_{i,j}$ of the uppermost paths. So $B=\psi(W)$ by construction of $\psi$. We showed in \Cref{ID} that $\psi\circ\phi=\Id$, it follows that $A=B=\psi(W)$.

Suppose now that $A=\psi(W')$ where $W'$ is a matrix of weights that satisfies the weak parallelogram and trapeze inequalities. Then by \Cref{ID}, 
$W'=\phi(\psi(W'))=\phi(A)$, showing that $W'$ is uniquely determined by $A$.

(3) We show that if  $A=\psi(W)\in\TP^\trop$, then the weight $w_{i,j}$ satisfy the strict trapeze and parallelogram inequalities.
Figure~\ref{n'} is a modification of Figure~\ref{n}  in which we introduce  an additional  node in every level, denoted by $d_i,\ \forall i\in[n]$.

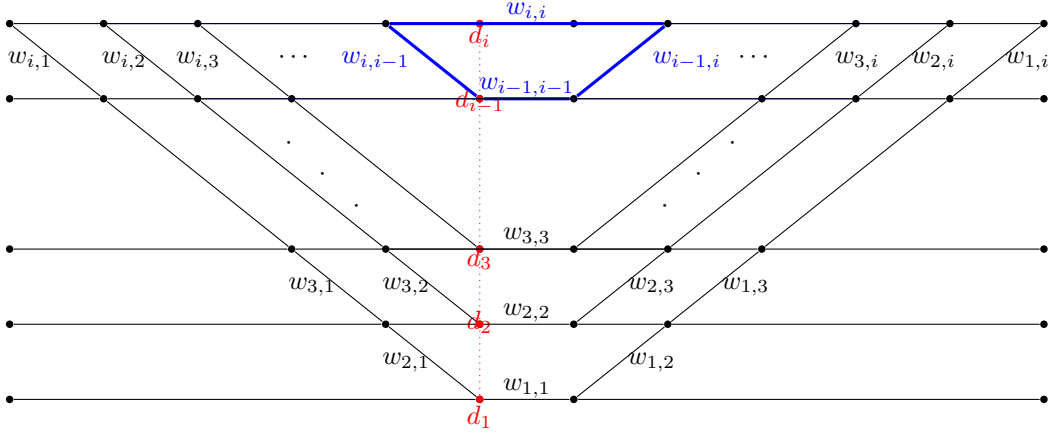
\begin{figure}[htbp]\begin{tikzpicture}[main_node/.style={circle,fill=black,
minimum size=0.05em,inner sep=1pt]}]
    \node[main_node] (31) at (-6.25,1) {};
    \node[main_node] (32) at (-6.25,0) {};
    \node[main_node] (30) at (-6.25,2) {};
    \node[main_node] (29) at (-6.25,4) {};
    \node[main_node] (15) at (-6.25,5) {};
    \node[main_node] (16) at (-5,5) {};
    \node[main_node] (11) at (-5,4) {};
    \node[main_node] (12) at (-3.75,4) {};
    \node[main_node] (25) at (-3.75,5) {};
    \node[main_node] (7) at (-2.5,2) {};
    \node[main_node] (21) at (-2.5,4) {};
    \node[main_node] (26) at (-1.25,5) {};
    \node[main_node] (2) at (-1.25,1)  {};
    \node[main_node] (8) at (-1.25,2) {};
    \node[main_node] (1) at (0,0) {};
    \node[main_node] (5) at (0,1) {};
    \node[main_node] (19) at (0,2) {};
   \node[main_node] (22) at (0,4) {};
    \node[main_node] (37) at (0,5) {};    
    \node[main_node] (38) at (1.25,5) {};
    \node[main_node] (4) at (1.25,0) {};
    \node[main_node] (24) at (1.25,4) {};
    \node[main_node] (20) at (1.25,2) {};
     \node[main_node] (6) at (1.25,1) {};
     \node[main_node] (3) at (2.5,1) {};
    \node[main_node] (27) at (2.5,5) {};
     \node[main_node] (9) at (2.5,2) {};
    \node[main_node] (10) at (3.75,2) {};
    \node[main_node] (23) at (3.75,4) {};
    \node[main_node] (13) at (5,4) {};
     \node[main_node] (28) at (5,5) {};
    \node[main_node] (17) at (6.25,5) {};
    \node[main_node] (14) at (6.25,4) {};
    \node[main_node] (18) at (7.5,5) {};
    \node[main_node] (33) at (7.5,4) {};
    \node[main_node] (34) at (7.5,2) {};
    \node[main_node] (35) at (7.5,1) {};
    \node[main_node] (36) at (7.5,0) {};

    \node[main_node,color=red] (37) at (0,0) {};
    \node[main_node,color=red] (38) at (0,5) {};
\draw[main_node,color=red,dotted]  (37) edge node{$\begin{array}{c}\\d_i\\\\d_{i-1}\\ \\ \\\\\\d_3\\\\d_2\\\\\\d_1\end{array}$} (38);  
    \node[main_node,color=red] (39) at (0,1) {};
    \node[main_node,color=red] (40) at (0,4) {};
    \node[main_node,color=red] (41) at (0,2) {};

\draw[main_node,color=blue,very thick]  (27) edge node{} (26);  
\draw[main_node,color=blue,very thick]  (24) edge node{} (22);

 \draw[main_node]  (1) edge node{$\begin{array}{c}w_{1,1}\\ {}\end{array}$} (4);
 \draw[main_node]  (1) edge node{$w_{2,1}\ \ \ \ \ \ $} (2);   
 \draw[main_node]  (3) edge node{$\ \ \ \ \ \ \ w_{1,2}$} (4);    
\draw[main_node]  (5) edge node{$\begin{array}{c}w_{2,2}\\ {}\end{array}$} (6);  
\draw[main_node]  (2) edge node{} (5);  
\draw[main_node]  (3) edge node{} (6);  
 \draw[main_node]  (2) edge node{$w_{3,1}\ \ \ \ \ \ $} (7);
 \draw[main_node]  (5) edge node{$w_{3,2}\ \ \ \ \ \ $} (8);
 \draw[main_node]  (6) edge node{$\ \ \ \ \ \ \ w_{2,3}$} (9);
 \draw[main_node]  (3) edge node{$\ \ \ \ \ \ \ w_{1,3}$} (10);
\draw[main_node]  (7) edge node{} (10);  
\draw[main_node]  (8) edge node{$\begin{array}{c}w_{3,3}\\ {}\end{array}$} (9);  
\draw[main_node,color=blue]  (12) edge node{$\begin{array}{c}{}\\{}\\w_{i-1,i-1}\\\\\\\\ \end{array}$} (13);  
\draw[main_node,color=blue]  (16) edge node{$\begin{array}{c}w_{i,i}\\ {}\end{array}$} (17);  
\draw[main_node]  (11) edge node{$w_{i,1}\ \ \ \ \ \ $} (15);  
\draw[main_node]  (12) edge node{$w_{i,2}\ \ \ \ \ \ $} (16);  
\draw[main_node]  (21) edge node{$w_{i,3}\ \ \ \ \ \ \cdots$} (25);  
\draw[main_node,color=blue,very thick]  (22) edge node{$\ \ w_{i,i-1}\ \ \ \ \ \ \ \ \ \ \ \ \ \ \ $} (26);  
\draw[main_node]  (14) edge node{$\ \ \ \ \ \ \ w_{1,i}$} (18);  
\draw[main_node]  (13) edge node{$\ \ \ \ \ \ \ w_{2,i}$} (17);  
\draw[main_node]  (23) edge node{$\cdots\ \ \ \ \ \ \ w_{3,i}$} (28);  
\draw[main_node,color=blue,very thick]  (24) edge node{$\ \ \ \ \ \ \ \ \ \ \ \ \ \ \ \ \ w_{i-1,i}\ \ $} (27);  
\draw[main_node]  (11) edge node{} (14);  
\draw[main_node]  (15) edge node{} (18); 
\draw[main_node]  (7) edge node{} (11);  
\draw[main_node]  (8) edge node{$\ \ \ \ \ \ \ \begin{array}{ccc}\cdot &{}&{}\\{}&\cdot &{}\\{}&{}&\cdot \end{array}$} (12);  
\draw[main_node]  (19) edge node{} (21);  
\draw[main_node]  (20) edge node{$\ \ \ \ \ \ \ \begin{array}{ccc}{} &{}&\cdot\\{}&\cdot &{}\\ \cdot&{}&{} \end{array}$} (23);  
\draw[main_node]  (9) edge node{} (13);  
\draw[main_node]  (10) edge node{} (14);   
\draw[main_node]  (11) edge node{} (29);  
 \draw[main_node]  (7) edge node{} (30);  
\draw[main_node]  (2) edge node{} (31);   
\draw[main_node]  (1) edge node{} (32);   
\draw[main_node]  (14) edge node{} (33);   
\draw[main_node]  (10) edge node{} (34);   
\draw[main_node]  (3) edge node{} (35);   
\draw[main_node]  (4) edge node{} (36);    
\end{tikzpicture}
\caption{Proof of \Cref{PN}, part (3),(i). The trapeze inequality fails: $w_{i,i}\leq w_{i,i-1}+w_{i-1,i-1}+w_{i-1,i}$}
\label{n'}
\end{figure}

(i). Assume that the strict trapeze inequality fails. That means that there exists minimal $i$ such that $$w_{i,i}\leq w_{i,i-1}+w_{i-1,i-1}+w_{i-1,i},$$ and the 
paths going from $i$ to $i$ will therefore pass through $d_{k},$ for some $ k< i$. Consider the transfer matrices $Z^\ell,Z^r$ of the sub graphs on the left 
and right  to the red line in Figure~\ref{n'} respectively. We get $A_{[i],[i]}=Z^\ell_{[i],[i-1]} \odot Z^r_{[i-1],[i]}$, where $M_{I,J}$ denotes the $I\times J$ submatrix of a matrix $M$ and recalling that $[i]=\{1,2,...,i\}$. 

We recall the following classical result in tropical linear algebra: {\em suppose that $F\in\rmax^{m\times m}$ can be factored as $F=G\odot H$ where $G\in\rmax^{m\times s}$, $H\in\rmax^{s\times m}$ and
 $s< m$,
then $F$ is tropically singular.} Several proofs of this property have appeared (in fact, as part of stronger results on ranks of tropical matrices), for instance see~\cite[Theorem~10.0.1]{G.Ths}, \cite[Theorem~1.4]{DSS} and \cite[Theorem~8.6]{LDTS}.
As a result, we get that $A_{[i],[i]}$ is tropically singular, contradicting that $A\in\TP^\trop$.

(ii). Next, we assume that the strict parallelogram inequality fails. That means that there exists $i\ne j$ s.t.~$w_{j,i}\leq w_{j-1,i}$ or 
$w_{j,i}\leq w_{j,i-1}$. Without loss of generality, we consider the former. Ordering all $w_{i,j}:\ i\ne j$ from right to   left and from bottom 
to top, we choose the first pair $i,j$ 
 that does  not satisfy the parallelogram inequality. That is, $w_{t,s}>w_{t',s'}\ \forall t'<t\text{ and }s'<s$ for every $t<i\text{ and }s<j$  
(see Figure~\ref{n'''}).
Calculating the $2\times 2$ minor of rows $j,j-1$ and columns $i,i-1$ of the transfer matrix, we get that the $(j-1,i-1)$,$(j-1,i)$ and $(j,i-1)$ entries are 
obtained by the  uppermost paths (also shown on Figure~\ref{n'''}). In particular $\UM_{j-1,i}=\UM_{j-1,i-1} +w_{j-1,i}$. However, the $j,i$ entry is obtained by the uppermost 
path from $j$ to $i-1$ concatenated by 
$w_{j-1,i}$. Therefore,
$$\left(\begin{array}{cc}
\UM_{j-1,i-1} & \UM_{j-1,i}\\
\UM_{j,i-1}  &\UM_{j,i-1} w_{j-1,i}
\end{array}\right)=\left(\begin{array}{cc}
\UM_{j-1,i-1} & \UM_{j-1,i-1} w_{j-1,i}\\
\UM_{j,i-1}  &\UM_{j,i-1} w_{j-1,i}
\end{array}\right)$$ which is singular. Contradiction.

\begin{figure}[htbp]\begin{tikzpicture}[main_node/.style={circle,fill=black,
minimum size=0.05em,inner sep=1pt]}]
    \node[main_node] (31) at (-6.25,1) {};
    \node[main_node] (30) at (-6.25,2) {};
    \node[main_node] (29) at (-6.25,4) {};
    \node[main_node] (15) at (-6.25,5) {};
    \node[main_node] (16) at (-5,5) {};
    \node[main_node] (11) at (-5,4) {};
    \node[main_node] (12) at (-3.75,4) {};
    \node[main_node] (25) at (-3.75,5) {};
    \node[main_node] (7) at (-2.5,2) {};
    \node[main_node] (21) at (-2.5,4) {};
    \node[main_node] (26) at (-1.25,5) {};
    \node[main_node] (2) at (-1.25,1)  {};
    \node[main_node] (8) at (-1.25,2) {};
  \node[main_node] (5) at (0,1) {};
    \node[main_node] (19) at (0,2) {};
   \node[main_node] (22) at (0,4) {};
    \node[main_node] (37) at (0,5) {};    
    \node[main_node] (38) at (1.25,5) {};
    \node[main_node] (24) at (1.25,4) {};
    \node[main_node] (20) at (1.25,2) {};
     \node[main_node] (6) at (1.25,1) {};
     \node[main_node] (3) at (2.5,1) {};
    \node[main_node] (27) at (2.5,5) {};
     \node[main_node] (9) at (2.5,2) {};
    \node[main_node] (10) at (3.75,2) {};
    \node[main_node] (23) at (3.75,4) {};
    \node[main_node] (13) at (5,4) {};
     \node[main_node] (28) at (5,5) {};
    \node[main_node] (17) at (6.25,5) {};
    \node[main_node] (14) at (6.25,4) {};
    \node[main_node] (18) at (7.5,5) {};

\node[draw=black,circle,white](1) at (8,5){$\color{black}i$};
\node[draw=black,circle,white](1) at (8,4){$\color{black}i-1$};
\node[draw=black,circle,white](1) at (8,2){$\color{black}j$};
\node[draw=black,circle,white](1) at (8,1){$\color{black}j-1$};

    \node[main_node] (33) at (7.5,4) {};
    \node[main_node] (34) at (7.5,2) {};
    \node[main_node] (35) at (7.5,1) {};
\draw[main_node,color=blue,dashed,very thick]  (5) edge node{$\begin{array}{c}w_{j-1,j-1}\\ {}\end{array}$} (6);  
\draw[main_node,color=blue,dashed,very thick]  (2) edge node{} (5);  
               \draw[main_node]  (3) edge node{} (6);  

 \draw[main_node]  (5) edge node{$<\ \ w_{j,j-1}\ \ \ \ \ \ $} (8);
 \draw[main_node,color=blue,dashed,very thick]  (6) edge node{$\ \ \ \ \ \ \ w_{j-1,j}\ \ >$} (9);

       \draw[main_node]  (7) edge node{} (10);

\draw[main_node,dashed,blue,very thick]  (8) edge node{} (20);  

\draw[main_node]  (12) edge node{$\begin{array}{c}{}\\{}\\w_{i-1,i-1}\\ \cdot\\\cdot\\\cdot\end{array}$} (13);  
\draw[main_node]  (16) edge node{$\begin{array}{c}w_{i,i}\\ {}\end{array}$} (17);  

\draw[main_node]  (12) edge node{$w_{i,j-1}\ \ \ \ \ \ $} (16);  
\draw[main_node]  (21) edge node{$w_{i,j}\ \ \ \ \ \ $} (25);  
\draw[main_node]  (22) edge node{$\cdots\ \ w_{i,i-1}\ \ \ \ \ \ \ \ \ \ \ \ \ \ \ $} (26);  

\draw[main_node,color=red,very thick]  (13) edge node{$\ \ \ \ \ \ \ \ \ \ \  w_{j-1,i}>\cdots>$} (17);  
\draw[main_node,color=blue,dashed,very thick]  (13) edge node{$\ \ \ \ \ \ \ \ \ \ \  w_{j-1,i}>\cdots>$} (17);  
\draw[main_node,color=red,very thick]  (23) edge node{$\ \ \ \ \ \ \ w_{j,i}\leq\ \ $} (28);
      \draw[main_node]  (24) edge node{$\ \ \ \ \ \ \ \ \ \ \ \ \ \ \ \ \ w_{i-1,i}\ \ \cdots$} (27);  
       \draw[main_node]  (11) edge node{} (14);  
       \draw[main_node]  (15) edge node{} (18); 
       \draw[main_node]  (19) edge node{$\begin{array}{ccc}< &{}&{}\\{}&\ddots &{}\\{}&{}&< \end{array}\ \ \ \ \ \ \ $} (21);  
       \draw[main_node]  (8) edge node{$\begin{array}{ccc}< &{}&{}\\{}&\ddots &{}\\{}&{}&< \end{array}\ \ \ \ \ \ \ $} (12);  

\draw[main_node,color=blue,dashed,very thick]  (20) edge node{$\ \ \ \ \ \ \ \begin{array}{ccc}{} &{}&>\\{}&\text{\reflectbox{$\ddots$}}&{}\\ >&{}&{} \end{array}$} (23);  
\draw[main_node,color=blue,dashed,very thick]  (9) edge node{$\ \ \ \ \ \ \ \begin{array}{ccc}{} &{}&>\\{}&\text{\reflectbox{$\ddots$}}&{}\\ >&{}&{} \end{array}$} (13);  

       \draw[main_node]  (11) edge node{} (29);  
 \draw[main_node,color=blue,dashed, very thick]  (8) edge node{} (30);  
\draw[main_node,color=blue,dashed,very thick]  (2) edge node{} (31);   

\draw[main_node,color=blue,dashed,very thick]  (17) edge node{} (18);

   \draw[main_node]  (14) edge node{} (33);   
\draw[main_node,color=blue,dashed,very thick]  (23) edge node{} (33);   
   \draw[main_node]  (10) edge node{} (34);   
   \draw[main_node]  (3) edge node{} (35);   

\end{tikzpicture}
\caption{Proof of \Cref{PN}, part (3),(ii). The parallelogram inequality fails: $w_{j,i}\leq w_{j-1,i}$ (edge in red). Only three of four uppermost paths (dashed blue paths) determine the $\{j-1,j\}\times\{i-1,i\}$ minor. The 
 $j,i$ entry of this minor is given by $\UM_{j,i-1} w_{j-1,i}$.}
\label{n'''}
\end{figure}
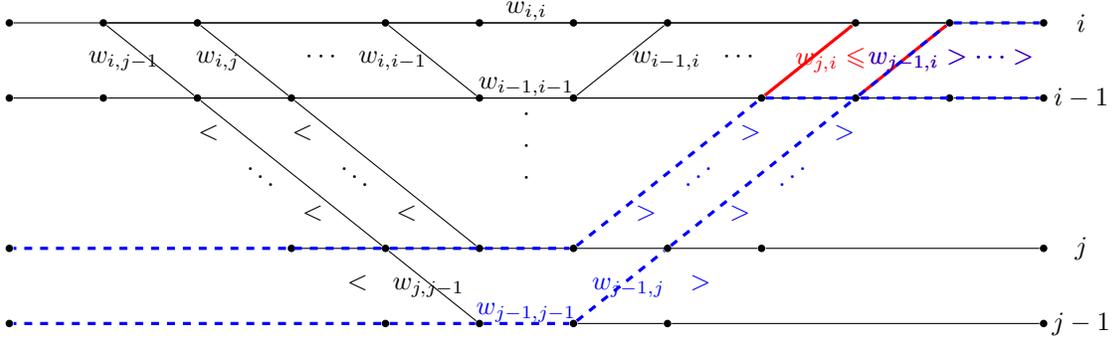
We showed that $W$ satisfies  the strict  parallelogram and trapeze  inequalities. 
Then the uniqueness of $W$ follows from the final part of statement (2) of the theorem.

\end{proof}

As an immediate consequence of the proof of \Cref{PN}, we get
the following result.

\begin{pro}\label{parametrization}

The transformation $\psi$ sends bijectively the set $\mathcal{W}\subset \R^{n\times n}$ of matrices of weights
  that satisfy the weak trapeze and parallelogram inequalities to
  $\TN^\trop(\R)$. Similarly, $\psi$ sends bijectively the interior of $\mathcal{W}$ to $\TP^\trop$.\hfill\qed 
\end{pro}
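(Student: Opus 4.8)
The plan is to derive \Cref{parametrization} directly from the propositions already established, essentially by collecting the bijection statements and observing that the fixed-point identities $\psi\circ\phi=\phi\circ\psi=\Id$ from \Cref{ID} already exhibit $\psi$ and $\phi$ as mutually inverse bijections on $\R^{n\times n}$; the only thing left is to identify the image of $\mathcal{W}$ (and of its interior) under $\psi$. First I would recall that by \Cref{ID} the map $\psi\colon\R^{n\times n}\to\R^{n\times n}$ is a bijection with inverse $\phi$, so it suffices to show $\psi(\mathcal{W})=\TN^\trop(\R)$ and $\psi(\operatorname{int}\mathcal{W})=\TP^\trop$, or equivalently $\phi(\TN^\trop(\R))=\mathcal{W}$ and $\phi(\TP^\trop)=\operatorname{int}\mathcal{W}$.

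For the inclusion $\phi(\TN^\trop(\R))\subseteq\mathcal{W}$: this is exactly \Cref{phipsi}, which says that if $A\in\TN^\trop(\R)$ then $W:=\phi(A)$ satisfies the weak trapeze and parallelogram inequalities, i.e.\ $W\in\mathcal{W}$. For the reverse inclusion $\mathcal{W}\subseteq\phi(\TN^\trop(\R))$, equivalently $\psi(\mathcal{W})\subseteq\TN^\trop(\R)$: take $W\in\mathcal{W}$. By \Cref{rltn} (weak case), the uppermost path is a path of maximal tropical weight from $i$ to $j$ for every $i,j$, so the tropical transfer matrix of $G_n$ with weights $W$ has $i,j$ entry equal to $\UM_{i,j}=\psi(W)_{i,j}$; hence $\psi(W)$ is the transfer matrix of a totally connected planar network, and by \Cref{PNTN} it lies in $\TN^\trop(\R)$. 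This gives $\psi(\mathcal{W})=\TN^\trop(\R)$, and since $\psi$ is a bijection with inverse $\phi$, it restricts to a bijection $\mathcal{W}\to\TN^\trop(\R)$.

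For the strict statement, I would argue the same way with strict inequalities throughout. The set $\operatorname{int}\mathcal{W}$ is exactly the set of weight matrices satisfying the \emph{strict} trapeze and parallelogram inequalities, since $\mathcal{W}$ is the solution set of a finite system of (weak) linear inequalities and its interior is obtained by making all of them strict (one should note these inequalities are irredundant in the relevant directions, or simply observe that strictness of all the defining inequalities is an open condition contained in the interior, and conversely a boundary point violates equality in at least one). Given $W$ with the strict inequalities, \Cref{rltn} (strict case) says the uppermost path is the \emph{unique} path of maximal weight, so by \Cref{PN}(1) the transfer matrix $\psi(W)$ lies in $\TP^\trop$; this gives $\psi(\operatorname{int}\mathcal{W})\subseteq\TP^\trop$. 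Conversely, if $A\in\TP^\trop$ then $\phi(A)$ satisfies the strict inequalities by \Cref{phipsi} (or by the argument in the proof of \Cref{PN}(3)), so $\phi(A)\in\operatorname{int}\mathcal{W}$ and $A=\psi(\phi(A))\in\psi(\operatorname{int}\mathcal{W})$. Combining the two inclusions and using that $\psi$ is a bijection yields the bijection $\operatorname{int}\mathcal{W}\to\TP^\trop$.

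The only genuinely delicate point is the identification $\operatorname{int}\mathcal{W}=\{W:\ W \text{ satisfies the strict trapeze and parallelogram inequalities}\}$; the inclusion $\supseteq$ is clear since strict inequalities define an open set, and for $\subseteq$ one checks that no defining weak inequality can be dropped, so that on the boundary at least one becomes an equality. Everything else is a bookkeeping assembly of \Cref{ID}, \Cref{rltn}, \Cref{phipsi}, \Cref{PNTN} and \Cref{PN}; accordingly the proof is short and the environment closes immediately after.
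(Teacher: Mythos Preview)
Your proposal is correct and follows exactly the approach the paper intends: the paper states \Cref{parametrization} as ``an immediate consequence of the proof of \Cref{PN}'' and closes with a \qed, and what you have written is precisely the detailed unpacking of that implication, assembling \Cref{ID}, \Cref{rltn}, \Cref{phipsi}, \Cref{PNTN} and \Cref{PN}(1),(3) in the natural order. One minor remark: for the identification of $\operatorname{int}\mathcal{W}$ with the strict-inequality locus you do not actually need irredundancy of the defining inequalities---since each trapeze and parallelogram inequality is given by a nonzero linear form and $\mathcal{W}$ has nonempty interior, any interior point automatically satisfies every defining inequality strictly.
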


\begin{rem}
  The planar network in \Cref{ex-easy} illustrates
  the uniqueness result in \Cref{PN}. If $a_{22}>6$,
  then, the matrix $A$ is tropical totally positive,
  and the only matrix of weight which yields $A$
  as a transfer matrix is obtained for $\alpha=a_{2,2}$.
  If $a_{2,2}=6$, then, $A\in\TN^\trop(\R)$, and
  any weight $\alpha\leq 6$ yields the transfer matrix $A$.
\end{rem}
\begin{exa} Let~$G_3$ be the canonical totally connected planar network
  shown in \Cref{fig-g3},  and let~$A$ be its tropical transfer matrix:
$$\left(\begin{array}{ccc}
w_{1,1}     &     w_{1,1}\odot w_{1,2}     &     w_{1,1}\odot w_{1,2}\odot w_{1,3}\\
&&\\
w_{2,1}\odot w_{1,1}     &     w_{2,1}\odot w_{1,1}\odot w_{1,2}\oplus w_{2,2}     &     w_{2,1}\odot w_{1,1}\odot w_{1,2}\odot w_{1,3}\oplus \\
&&w_{2,2}\odot w_{1,3}\oplus w_{2,2}\odot w_{2,3}\\
&&\\
w_{3,1}\odot w_{2,1}\odot w_{1,1}     &     w_{3,1}\odot w_{2,1}\odot w_{1,1}\odot w_{1,2}\oplus    &     w_{3,1}\odot w_{2,1}\odot w_{1,1}\odot w_{1,2}\odot w_{1,3}\oplus\\
                        &    w_{3,1}\odot w_{2,2}\oplus w_{3,2}\odot w_{2,2}          & (w_{3,1}\oplus  w_{3,2})\odot w_{2,2}\odot (w_{2,3}\oplus  w_{1,3})\\
&&\oplus w_{3,3}
\end{array}\right).$$
  \begin{figure}[htbp]\begin{tikzpicture}[main_node/.style={circle,
fill=black,minimum size=0.05em,inner sep=1pt]}]
    \node[main_node] (1) at (0,0) {};
    \node[main_node] (2) at (-1,1)  {};
    \node[main_node] (3) at (2,1) {};
    \node[main_node] (4) at (1,0) {};
    \node[main_node] (5) at (0,1) {};
    \node[main_node] (6) at (1,1) {};
    \node[main_node] (7) at (-2,2) {};
    \node[main_node] (8) at (-1,2) {};
    \node[main_node] (9) at (2,2) {};
    \node[main_node] (10) at (3,2) {};
    \node[main_node] (11) at (0,2) {};
    \node[main_node] (12) at (1,2) {};

    \node[main_node] (13) at (-2,0) {};
    \node[main_node] (14) at (3,0) {};
    \node[main_node] (15) at (-2,1) {};
    \node[main_node] (16) at (3,1) {};

 \draw[main_node]  (1) edge node{$\begin{array}{c}w_{1,1}\\ {}\end{array}$} (4);
 \draw[main_node]  (1) edge node{$w_{2,1}\ \ \ \ \ \ $} (2);   
 \draw[main_node]  (3) edge node{$\ \ \ \ \ \ \ w_{1,2}$} (4);    
\draw[main_node]  (5) edge node{$\begin{array}{c}w_{2,2}\\ {}\end{array}$} (6);  
\draw[main_node]  (2) edge node{} (5);  
\draw[main_node]  (3) edge node{} (6);  
 \draw[main_node]  (2) edge node{$w_{3,1}\ \ \ \ \ \ $} (7);
 \draw[main_node]  (5) edge node{$w_{3,2}\ \ \ \ \ \ $} (8);
 \draw[main_node]  (6) edge node{$\ \ \ \ \ \ \  w_{2,3}$} (9);
 \draw[main_node]  (3) edge node{$\ \ \ \ \ \ \ w_{1,3}$} (10);
\draw[main_node]  (7) edge node{} (10);  

\draw[main_node]  (13) edge node{} (1);
  \draw[main_node]  (14) edge node{} (4);
  \draw[main_node]  (15) edge node{} (2);
  \draw[main_node]  (16) edge node{} (3);  

\draw[main_node]  (8) edge node{$\begin{array}{c}w_{3,3}\\ {}\end{array}$} (9);  
  \end{tikzpicture}
  \caption{The canonical totally connected planar network $G_3$}\label{fig-g3}
\end{figure}
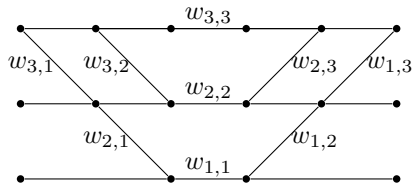

If the weights satisfy the trapeze and parallelogram inequalities,
then, the transfer matrix becomes
\begin{equation}\label{WM}A=\left(\begin{array}{ccccc}
w_{1,1}     &&     w_{1,1}\odot w_{1,2}     &&     w_{1,1}\odot w_{1,2}\odot w_{1,3}\\
&&&&\\
w_{2,1}\odot w_{1,1}     &&     w_{2,2}     &&   w_{2,2}\odot w_{2,3}\\
&&&&\\
w_{3,1}\odot w_{2,1}\odot  w_{1,1}     &&     w_{3,2}\odot w_{2,2}   &&      w_{3,3}
\end{array}\right)\in\TP^\trop\enspace.\end{equation}
\end{exa}

\section{Uniqueness of the factorization of tropical totally positive matrices in terms of Jacobi matrices}\label{S3}
We now apply \Cref{PN} to deduce a uniqueness result
for the factorization of tropical totally positive
matrices.
As recalled above, a classical theorem of Loewner and Whitney
shows that every invertible totally nonnegative matrix is a product
of elementary Jacobi matrices with nonnegative entries,
see~\cite{Loewner},~\cite{Whitney}, and Theorem~12 of~\cite{F&Z}. 
Moreover, if one considers a specific sequence of $n^2$
elementary Jacobi matrices, then one gets a bijective
parametrization. To explain the latter point,
and to derive a tropical analogue of this parametrization
result, some further definitions
are in order. We follow the notation of~\cite{F&Z}. If $s$ is a scalar parameter, and $i\in[n]$, the Jacobi
matrix $x_i(s)$ is the $n\times n$ matrix
\[ x_{i}(s)= I + s E_{i,i+1}
\]
where
$I$ is the identity matrix, and $E_{ij}$ denotes
the $(i,j)$ element of the canonical basis of $\R^{n\times n}$,
with~$1$ in position $(i,j)$ and $0$ elsewhere.
We also set
\[
x_{\barred{i}}(s)= I+sE_{i+1,i}
\]
and
\[x_{\!\circled{\,\,i}\,}(s)= I+(s-1)E_{i,i}
\enspace .
\]
For instance, when $n=3$,
\[
x_{1}(s) =\left(\begin{array}{ccc} 1 & s & 0 \\ 0 & 1 & 0\\ 0 & 0 & 1
\end{array}  \right),
\;
x_{\barred{2}}(s) =\left(\begin{array}{ccc} 1 & 0 & 0 \\ 0 & 1 & 0\\ 0 & s & 1
\end{array}  \right),
\;
x_{\!\circled{\,\,3}\,}(s) =\left(\begin{array}{ccc} 1 & 0 & 0 \\ 0 & 1 & 0\\ 0 & 0 & s
\end{array}  \right).
\]
We consider the alphabet $\mathsf{A}:=\{1, \dots, n-1, 
\circled{\,1}\,, \dots, \circled{\,n}\,, 
\overline{1}, \dots, \overline{n-1}\}$.
Given a word $\bi=(i_1,\dots,i_k)\in \mathsf{A}^k$,
and a vector of parameters $\bs=(s_1,\dots,s_k)$, we define
\[
x_{\bi}(\bs) = x_{i_1}(s_1)\dots x_{i_k}(s_k)\enspace .
\]
The transfer matrix of the planar network $G_n$ coincides
  with $x_{\bi}(\bs)$ for some special word $\bi$,
 where the entries of $\bs$ are precisely the weights $w_{ij}$, listed in a suitable order.

 Indeed, consider the words
\[\bbi^1:=(\overline{n-1)},\;
 \bbi^2:=(\overline{n-2},\overline{n-1}),\; \dots\;,
 \bbi^{n-1}=
 (\bar{1},\dots , \overline{n-1}),
 \]
\[  \bi^{n-1}=
  ({n-1},\dots , {1}),\;
  \dots\;,
  \bi^2:=({n-1},{n-2}),\;
  \bi^1:=({n-1)},
  \]
  and consider the concatenated word, which is of length $n^2$,
  \begin{align}\label{e-def-word}
     \sbi:=\bbi^1\dots\bbi^{n-1}
  \circled{\,1}\dots\circled{\,n}
  \bbi^{n-1}\dots\bbi^{1} \enspace .
  \end{align}
  Observe that these words are constructed in a transparent way
  from the canonical planar network in \Cref{n}. 
  We divide the planar network $G_n$ into three parts, reading the weights in columns. The first part (left of the network) consists of $n-1$ layers with only horizontal or descending arcs, having weights $w_{i,j}$ s.t. $i>j$.
The third part (right of the network) is symmetric to the first one, it consists of $n-1$ layers with only horizontal or ascending arcs, having weights $w_{i,j}$ with $i<j$.

The word $\bbi^1$
  is associated to the first layer, $\bbi^2$ to the second layer, etc.
  Each letter of $\bbi^k$ corresponds to a descending arc in the layer
  $k$, i.e., the only letter $\overline{n-1}$ in $\bbi^1$
  corresponds to the arc from row $n$ to row $n-1$ with weight $w_{n,1}$
  in layer $1$, the two letters $\overline{n-2},\overline{n-1}$ in $\bbi^2$
  correspond to the two descending arcs from rows $n-1$ to $n-2$, and from rows
  $n$ to $n-1$, with respective weights $w_{n-1,1}$ and $w_{n,2}$, etc.
  We now order the parameters $w_{ij}$ layer by layer, consistently with the order   of letters inside each layer. Hence, we set
  \begin{align}\label{W}S_\sbi(W)&:=(w_{n,1},w_{n-1,1},w_{n,2},\dots,
  w_{2,1}, w_{3,1},\dots,w_{n,n-1},
  w_{1,1},\dots, w_{n,n},\\
\nonumber & \qquad  \qquad\qquad w_{n-1,n},\dots,w_{1,2},
  \dots, w_{2,n},w_{1,n-1},w_{1,n})
  \end{align}

  It follows from the interpretation of matrix multiplication
  in terms of concatenation of paths that
  \[
  x_{\sbi}(S_{\sbi}(W))
  \]
  is precisely the transfer matrix of the graph $G_n$ equipped
  with the weights $W=(w_{ij})$.
  Theorem~13 of \cite{F&Z} shows that the map $\bs \to x_{\sbi}(\bs)$ is
  a bijection from $\R_{>0}^{n^2}$ to the set of $n\times n$ totally
  positive matrices. That is, each totally
  positive matrix can be written as $x_{\sbi}(\bs)$ for some
  $\bs\in\R_{>0}^{n^2}$, where  such~$\bs$ is unique.

In other words, the  sequence of Jacobi elementary matrices in the factorization of the transfer matrix,  
corresponds to  the  sequence of weights  in~\eqref{W}, 
in a way that the Jacobi elementary matrix that corresponds to  $w_{i,j}$ is:
\begin{equation}\label{law}x_{\overline{i-1}}(w_{i,j})\text{ where }i>j,\ x_{i}(w_{i,j})\text{ where } i<j\text{ and }x_{\circled{\ i}}(w_{i,i})\text{ where } i=j\text{ (see~\cref{classexa})}.\end{equation}

\begin{exa}\label{classexa} The 
 transfer matrix
 $$ \left(\begin{array}{ccc}
w_{1,1}     &     w_{1,1} w_{1,2}     &     w_{1,1} w_{1,2} w_{1,3}\\
&&\\
w_{2,1} w_{1,1}     &     w_{2,1} w_{1,1} w_{1,2}+ w_{2,2}     &     w_{2,1} w_{1,1} w_{1,2} w_{1,3}+ \\
&&w_{2,2} w_{1,3}+ w_{2,2} w_{2,3}\\
&&\\
w_{3,1} w_{2,1} w_{1,1}     &     w_{3,1} w_{2,1} w_{1,1} w_{1,2}+    &     w_{3,1} w_{2,1} w_{1,1} w_{1,2} w_{1,3}+\\
                        &    w_{3,1} w_{2,2}+ w_{3,2} w_{2,2}          & (w_{3,1}+  w_{3,2}) w_{2,2} (w_{2,3}+  w_{1,3})\\
&&+ w_{3,3}
\end{array}\right)$$
 of the canonical planar network in~\Cref{fig-g3}
 can be factored as 
$$x_{\overline{2}}(w_{3,1}) x_{\overline{1}}(w_{2,1}) x_{\overline{2}}(w_{3,2})
x_{\circled{\ 1}}(w_{1,1}) x_{\circled{\ 2}}(w_{2,2}) x_{\circled{\ 3}}(w_{3,3})
x_{{2}}(w_{2,3}) x_{{1}}(w_{1,2}) x_{{2}}(w_{1,3}),$$
corresponding to the weight sequence $(w_{3,1},w_{2,1},w_{3,2},w_{1,1},w_{2,2},w_{3,3},w_{2,3},w_{1,2},w_{1,3})$, and to the word
$\sbi=\bar 2 \bar 1 \bar 2 \circled{\,1}
\circled{\,2}
\circled{\,3} 2 1 2 $.
\end{exa}
  We now tropicalize the previous constructions.
  We define the matrices $x^\trop_i(s):= I^\trop \oplus s\odot E^\trop_{i,i+1}$,
  where $I^{\trop}$ denotes the tropical identity matrix,
  and $E^{\trop}_{i,j}$ the $(i,j)$ matrix of the canonical tropical
  basis, hence $(x^\trop_i(s))_{ij}= 0$ for $i=j$,
  $(x^\trop_i(s))_{i,i+1}= s$, and $x_{i,j}=-\infty$ otherwise.
  Similarly, $x^\trop_{\bar i}(s):= I^\trop \oplus s\odot E^\trop_{i+1,i}$.
  Finally, $x^\trop_{\!\circled{\,\,i}\,}(s)$ is defined as
  the matrix whose diagonal entries
  are $0,\dots,0,s,0,\dots,0$ ($s$ is in $i$th position), and whose
  off-diagonal entries are $-\infty$, so $x^\trop_{\!\circled{\,\,i}\,}(s)$
  is a tropical diagonal matrix.
  For instance, when $n=3$,
\[
x^\trop_{1}(s)=\left(\begin{array}{ccc} 0 & s & \!-\infty\! \\ \!-\infty\! & 0 & \!-\infty\!\\ \!-\infty\! & \!-\infty\! & 0
\end{array}  \right),
\,
x^\trop_{\barred{2}}(s) =\left(\begin{array}{ccc} 0 & \!-\infty\! & \!-\infty\! \\ \!-\infty\! & 0 & \!-\infty\!\\ \!-\infty\! & s & 0
\end{array}  \right),
\,
x^\trop_{\!\circled{\,\,3}\,}(s) =\left(\begin{array}{ccc} 0 & \!-\infty\! & \!-\infty\! \\ \!-\infty\! & 0 & \!-\infty\!\\ \!-\infty\! & \!-\infty\! & s
\end{array}  \right).
\]
Then, for
  $\bi=(i_1,\dots,i_k)\in \mathsf{A}^k$, given
a vector of real parameters $\bs=(s_1,\dots,s_k)$, we define
\[
x^\trop_{\bi}(\bs) := x^\trop_{i_1}(s_1)\odot \dots \odot x^\trop_{i_k}(s_k)
\]

\begin{cor}\label{cor-unique}
  If $A$ is a tropical totally positive matrix, there exists
  a unique vector $\bs\in \R^{n^2}$ such that $x_{\sbi}^\trop(\bs)=A$,
  where $\sbi$ is the canonical word~\eqref{e-def-word}.
\end{cor}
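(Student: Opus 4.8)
The plan is to deduce the statement from \Cref{PN}(3) via the dictionary, recalled just before the statement, between products of elementary Jacobi matrices and the transfer matrix of $G_n$. The one point that genuinely needs checking is the tropical analogue of the classical factorization identity: for every matrix of weights $W=(w_{ij})$, the max-plus product $x^\trop_{\sbi}\bigl(S_{\sbi}(W)\bigr)$ equals the tropical transfer matrix of $G_n$ equipped with the weights $W$. This holds for exactly the same reason as classically. Each of the elementary matrices $x^\trop_i(s)$, $x^\trop_{\bar i}(s)$ and $x^\trop_{\circled{\,i}\,}(s)$ is the tropical transfer matrix of one elementary layer of $G_n$ (a single labelled diagonal arc superimposed on the trivial horizontal network, respectively a diagonal rescaling), and, by associativity of $\odot$ and distributivity of $\odot$ over $\oplus$ in $\rmax$, the $(i,j)$ entry of a max-plus product $x^\trop_{i_1}(s_1)\odot\cdots\odot x^\trop_{i_k}(s_k)$ equals the maximum, over all directed paths from source $i$ to target $j$ in the concatenation of the corresponding layers, of the sum of the weights of the arcs traversed. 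That is precisely the definition of the tropical transfer matrix of the concatenated network. (This may also be obtained by applying the nonarchimedean valuation to the classical identity over $\K$, positivity of the leading coefficients of the weights precluding any cancellation; note that no sign-nonsingularity or genericity hypothesis is needed here, because we compute the transfer matrix itself and not the tropicalization of a minor.) Finally, the reordering $S_{\sbi}\colon\R^{n\times n}\to\R^{n^2}$ of~\eqref{W} is a permutation of the coordinates, hence a bijection.

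Granting this identity, the corollary is immediate. For existence, given $A\in\TP^\trop$, \Cref{PN}(3) provides a matrix of weights $W$ whose tropical transfer matrix equals $A$; putting $\bs:=S_{\sbi}(W)$ and invoking the identity gives $x^\trop_{\sbi}(\bs)=A$. For uniqueness, suppose $\bs'\in\R^{n^2}$ satisfies $x^\trop_{\sbi}(\bs')=A$, and set $W':=S_{\sbi}^{-1}(\bs')$, which is well defined since $S_{\sbi}$ is a bijection. The identity shows that $A$ is the tropical transfer matrix of $G_n$ with weights $W'$, so the uniqueness clause of \Cref{PN}(3) forces $W'=W$, and therefore $\bs'=S_{\sbi}(W')=S_{\sbi}(W)=\bs$.

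The main obstacle, and in fact the only step carrying mathematical content, is the tropical factorization identity of the first paragraph: one has to make sure that replacing the operations $(+,\times)$ by $(\max,+)$ does not spoil the interpretation of the entries of a product of elementary matrices as optimal path weights. It does not, because that interpretation is a purely formal consequence of the semiring axioms, all of which hold in $\rmax$; everything else is the bookkeeping already set up in the excerpt, matching the letters of $\sbi$ and the coordinates of $S_{\sbi}(W)$ with the arcs of the three parts of $G_n$.
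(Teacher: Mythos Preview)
Your proof is correct and follows essentially the same approach as the paper: both deduce the result from \Cref{PN}(3) via the identification of $x^\trop_{\sbi}(S_{\sbi}(W))$ with the tropical transfer matrix of $G_n$, together with the fact that $S_{\sbi}$ is a bijection. The paper's proof simply asserts the factorization identity ``by construction of the word $\sbi$'', whereas you spell out why the semiring interpretation of matrix products as path weights carries over to $\rmax$; this extra justification is harmless and arguably welcome.
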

\begin{proof}
  By construction of the word $\sbi$, $x^\trop(\bs(W))$ is precisely
  the tropical transfer matrix of the planar network $G_n$ equipped
  with tropical weights $w_{ij}$. Then, the existence and uniqueness
  of the weights $w_{ij}$ follows from \Cref{PN}.
\end{proof}
\begin{exa}\label{tropexa}In analogy with~\cref{classexa}, if the weights in~\Cref{fig-g3} satisfy the strict trapeze and parallelogram inequalities, then 
the factorization of its $\TP^\trop$
 transfer matrix in~\eqref{WM} is $$x_{\overline{2}}(w_{3,1})\odot x_{\overline{1}}(w_{2,1})\odot x_{\overline{2}}(w_{3,2})\odot
x_{\circled{\ 1}}(w_{1,1})\odot$$$$ \ \ x_{\circled{\ 2}}(w_{2,2})\odot x_{\circled{\ 3}}(w_{3,3})\odot
x_{{2}}(w_{2,3})\odot x_{{1}}(w_{1,2})\odot x_{{2}}(w_{1,3}),$$
and the weights $w_{ij}$ are uniquely determined by the transfer matrix.\end{exa}

We next translate \Cref{cor-unique} in terms
of matrices over the nonarchimedean field $\mathbb{K}$.
\begin{cor}\label{cor-unique2}
  Suppose that $\mathcal{A}\in\mathbb{K}^{n\times n}$ 
  is the transfer matrix of the planar network $G_n$,
  equipped with weights $w_{ij}$ in the nonarchimedean field $\mathbb{K}$.
  Assume in addition that $\val(\mathcal{A})$ is
  a tropical totally positive matrix. Then, the valuation
  of the weights $w_{ij}$ can be computed knowing only
  the valuations of the entries $\mathcal{A}_{ij}$.
\end{cor}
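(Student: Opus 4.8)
The plan is to reduce \Cref{cor-unique2} directly to \Cref{cor-unique} by applying the nonarchimedean valuation to the factorization over $\mathbb{K}$. First, observe that since $\mathcal{A}$ is the transfer matrix of $G_n$ with weights $w_{ij}\in\mathbb{K}$, the interpretation of matrix multiplication as concatenation of paths gives the factorization $\mathcal{A}=x_{\sbi}(\mathcal{S}_{\sbi}(W))$, where $W=(w_{ij})$ and $\mathcal{S}_{\sbi}(W)$ is the vector of weights ordered as in~\eqref{W}. Each classical elementary Jacobi matrix $x_{\overline{i-1}}(s)$, $x_i(s)$ or $x_{\circled{\ i}}(s)$ tropicalizes, under entrywise valuation, to the corresponding $x^\trop_{\overline{i-1}}(\val s)$, $x^\trop_i(\val s)$ or $x^\trop_{\circled{\ i}}(\val s)$. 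Since the graph $G_n$ is totally connected, \Cref{PNTN} guarantees that the tropical transfer matrix is in $\TN^\trop(\R)$, which means it is sign-nonsingular in every entry of every partial product; more precisely, at each stage of forming the product $x_{i_1}(s_1)\cdots x_{i_k}(s_k)$, the tropical permanent computing a given entry is attained, and by sign-nonsingularity the valuation of the classical product equals the tropical product of valuations. Hence $\val(\mathcal{A})=x^\trop_{\sbi}\big(\val(\mathcal{S}_{\sbi}(W))\big)$, i.e., $\val(\mathcal{A})$ is the tropical transfer matrix of $G_n$ equipped with the tropical weights $\val(w_{ij})$.

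Now invoke the hypothesis that $\val(\mathcal{A})\in\TP^\trop$. By \Cref{cor-unique} (equivalently, by \Cref{PN}(3)), there is a \emph{unique} vector $\bs\in\R^{n^2}$ with $x^\trop_{\sbi}(\bs)=\val(\mathcal{A})$, and this vector is precisely the vector of tropical weights satisfying the strict trapeze and parallelogram inequalities. Since $\val(w_{ij})$ is one such vector realizing $\val(\mathcal{A})$, uniqueness forces $\val(w_{ij})$ to be exactly the vector recovered from $\val(\mathcal{A})$ by the explicit inverse map: $\val(w_{ij})=\phi(\val(\mathcal{A}))_{ij}$, where $\phi$ is the map of \Cref{ID}. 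Concretely, $\val(w_{i,i})=\val(\mathcal{A}_{i,i})$, $\val(w_{i,j})=\val(\mathcal{A}_{i,j})-\val(\mathcal{A}_{i,j-1})$ for $i<j$, and $\val(w_{i,j})=\val(\mathcal{A}_{i,j})-\val(\mathcal{A}_{i-1,j})$ for $i>j$. This shows the valuations of the $w_{ij}$ are determined by, and computable from, the valuations of the entries $\mathcal{A}_{ij}$ alone, which is the assertion.

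The one point needing a little care — the main (mild) obstacle — is justifying that valuation commutes with the product $x_{i_1}(s_1)\cdots x_{i_k}(s_k)$, i.e., that no cancellation occurs when forming the classical transfer matrix. This is exactly where total connectedness of $G_n$ is used: it ensures the tropical minors (indeed the tropical entries of every partial product) are attained by paths, and the sign-nonsingularity recalled in \S2 then yields $\val(\det)=\per$-type identities entrywise, so $\val$ of the classical product equals the tropical product of $\val$'s. Once this is in place, everything else is a formal consequence of the uniqueness in \Cref{cor-unique} together with the explicit formula for $\phi$.
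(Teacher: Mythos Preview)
Your proof follows the same route as the paper's: factor $\mathcal{A}=x_{\sbi}(S_\sbi(W))$ over $\mathbb{K}$, push the valuation through to obtain the tropical factorization $\val(\mathcal{A})=x^\trop_{\sbi}(S_\sbi(\val W))$, and then invoke the uniqueness of \Cref{cor-unique}. The explicit recovery formula $\val(w_{ij})=\phi(\val\mathcal{A})_{ij}$ is a welcome addition that the paper leaves implicit.

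Where your argument goes astray is precisely the paragraph you flag as ``the one point needing a little care''. You justify $\val(\mathcal{A})=x^\trop_{\sbi}(S_\sbi(\val W))$ via total connectedness and ``sign-nonsingularity in every entry of every partial product'', but sign-nonsingularity is a property of \emph{minors} (signed sums over permutations), whereas an entry of a matrix product is an \emph{unsigned} sum $\sum_k a_{ik}b_{kj}$. There is no alternation of signs to control; cancellation could only arise if some summands were negative in $\mathbb{K}$. The paper's argument is simply that, with weights in $\mathbb{K}_{\geq 0}$ (implicit in this planar-network context), the valuation is a semiring morphism $\mathbb{K}_{\geq 0}\to\rmax$, i.e.\ $\val(a+b)=\max(\val a,\val b)$ and $\val(ab)=\val a+\val b$ for $a,b\geq 0$, which yields the desired identity immediately. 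Neither \Cref{PNTN} nor sign-nonsingularity is needed at this step. Replace your detour with this morphism observation and the proof is complete and coincides with the paper's.
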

\begin{proof}
  By assumption, we have $\mathcal{A}=x_{\sbi}(S_\sbi(W))$, where $W=(w_{ij})\in \mathbb{K}^{n\times n}$ is the collection of weights. Since the valuation
  is a morphism from $\mathbb{K}_{\geq 0}$ to $\rmax$, we deduce
  that $\val(\mathcal{A})=x^\trop_{\sbi}(S_\sbi(\val W))$. By \Cref{cor-unique},
  the sequence of tropical weights $S_\sbi(\val W)$, or equivalently,
  the matrix $\val W$, is uniquely
  determined by the matrix $\val(\mathcal{W})$.

\end{proof}
\begin{rem}
  The assumption that $\val(A)\in \TP^\trop$ is essential
  in \Cref{cor-unique2}. Consider, for $a>0$,
  \[ \TP(\K)\ni  \mathcal{A}:= \left(\begin{array}{cc}
    1 & 1\\
    1 & 1 + t^{-a}
  \end{array}\right)
  =
  \left(\begin{array}{cc}
    1 & 0\\
    1 & 1
  \end{array}\right)
  \left(\begin{array}{cc}
    1 & 0 \\ 0 & t^{-a}
  \end{array}\right)
  \left(\begin{array}{cc}
    1 & 1\\
    0 & 1
  \end{array}\right)
  = x_{\sbi}(1,1,t^{-a},1)\enspace .
  \]
  So the valuation $-a= \val w_{22}$ in the bottom-right entry of the central factor cannot be recovered from $\val \mathcal{A}= \left(\begin{smallmatrix}0 & 0 \\ 0 & 0 \end{smallmatrix}\right)$.
  In contrast, the weight $w_{22}\in \K$ is uniquely determined
  by $\mathcal{A}$. This loss of uniqueness modulo valuation
  is easily explained: $w_{22}$ can be expressed from the entries
  $\mathcal{A}_{ij}$ by an expression involving a subtraction,
  and this expression does not ``tropicalize''. 
\end{rem}
Fomin and Zelevinski have characterized in~\cite{fominbruhat,F&Z}
the words $\bj$ in the alphabet $\mathsf{A}$ that
yield (bijective) parametrizations of the set of totally positive
matrices by the standard positive cone. These words are called
{\em factorization schemes}. We recall briefly one of their characterizations.
This characterization
involves the presentation of the symmetric group in $n$ letters by
the transpositions $\tau_1,\dots,\tau_{n-1}$, permuting consecutive indices.
These transpositions satisfy $\tau_i^2=1$, $\tau_i\tau_j=\tau_j\tau_i$
for $|i-j|\geq 2$, and $\tau_j\tau_i\tau_j=\tau_i\tau_j\tau_i$. A word
in the letters $\tau_i$ is said to be {\em reduced} if it is of minimal length
among all the words that yield a given permutation. This minimal
length is precisely the number of inversions of the permutation.
Among all permutations, there is one permutation $\sigma$ maximizing the number
of inversions: $\sigma$ reverses the order of $1,\dots,n$.
A word $\bj$ is a {\em factorization scheme} if it is a shuffle of a reduced word
in the alphabet $\{1,\dots,n\}$ representing
the permutation $\sigma$, of another reduced word
in the alphabet $\{\bar 1,\dots,\bar n\}$ representing
the same permutation, and of any word obtained
from $\circled{\,1}\,, \dots, \circled{\,n}$ by a reordering.
Recall that a shuffle of two words $\bj$ and $\bk$ is an interleaving of these words (it contains all the characters of $\bj$ and $\bk$, counting with multiplicities, and respects the order of all characters in individual words).
The word $\sbi$ constructed in~\eqref{e-def-word}
is an example of factorization scheme. 
It is shown
in Theorem~23 of~\cite{F&Z} that
if $\bj$ is a factorization scheme,
then the map $\bs\mapsto x_{\bj}(\bs)$ is a bijection
from $\R_{>0}^{n\times n}$ to $\TP$. In this way, different
factorization schemes lead to different parametrizations
of totally positive matrices. 
If $\bj$ and $\bk$ are factorization schemes, then
one can consider the transition map $R_{\bj,\bk}$
defined by
\begin{align}
  \label{e-def-R}
  x_{\bj}(\bs) = x_{\bk} (R_{\bk,\bj}(\bs)) \enspace ,
  \forall \bs\in \R_{>0}^{n^2} \enspace ,
  \end{align}
i.e., the map which associates to $\bs\in \R_{>0}^{n^2}$ the unique
$\bs'\in \R_{>0}^{n^2}$ such that $x_{\bk}(\bs')=x_{\bj}(\bs)$.
The map $R_{\bj,\bk}$
is a rational transformation that is {\em subtraction-free}, meaning
that every coordinate map of  $R_{\bj,\bk}$
can be expressed in terms of the parameters
$s_i$ using only sums, products and divisions.
Moreover,
\begin{align}
  R_{\bj,\bk}\circ R_{\bk,\bj} = I_{\R^{n^2}_{>0}} \enspace  .
  \label{e-inverse}
\end{align}
The same is true in the tropical setting.
\begin{thm}[Corollary of~\cite{F&Z}, see also~\cite{berensteinparametrization}]\label{coroFZ}
  For all factorization schemes $\bj$ and $\bk$, there is a tropical
  rational transformation $R^\trop_{\bk,\bj}: \R^{n^2}\to \R^{n^2}$ such that
  \begin{align}
    x_{\bj}^\trop(\bs) = x_{\bk}^\trop (R^\trop_{\bk,\bj}(\bs)) \enspace .
    \label{e-change}
  \end{align}
  Moreover,
  \begin{align}
    R^\trop_{\bk,\bj}\circ R^\trop_{\bj,\bk}=I_{\R^{n^2}}
    \enspace .\label{e-identity2}
    \end{align}
\end{thm}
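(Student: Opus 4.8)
The plan is to reduce the two asserted identities \eqref{e-change} and \eqref{e-identity2} to their classical counterparts \eqref{e-def-R} and \eqref{e-inverse} by lifting tropical parameters to the nonarchimedean field $\K$ and then pushing forward along the valuation. The key input is that the classical transition maps $R_{\bk,\bj}$ are \emph{subtraction-free}. I would fix once and for all a subtraction-free rational expression $E_{\bk,\bj}$ representing $R_{\bk,\bj}$, and define $R^\trop_{\bk,\bj}\colon\R^{n^2}\to\R^{n^2}$ as the tropical rational map obtained from $E_{\bk,\bj}$ by the substitutions $+\mapsto\max$, $\times\mapsto+$, $\div\mapsto-$; since these are total operations on $\R$, the resulting map $R^\trop_{\bk,\bj}$ is defined on all of $\R^{n^2}$.

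First I would record how the valuation interacts with the relevant operations. The restriction $\val\colon\Knneg\to\rmax$ is a morphism of semirings --- one has $\val(\puiseuxf+\puiseuxg)=\max(\val\puiseuxf,\val\puiseuxg)$ because nonnegative leading coefficients cannot cancel, and $\val(\puiseuxf\puiseuxg)=\val\puiseuxf+\val\puiseuxg$ --- and moreover $\val(\puiseuxf/\puiseuxg)=\val\puiseuxf-\val\puiseuxg$ for nonzero $\puiseuxf,\puiseuxg\in\Knneg$. By induction on the structure of a subtraction-free expression $E$ in $m$ variables this gives $\val(E(\puiseuxf))=E^\trop(\val\puiseuxf)$ for every $\puiseuxf\in\Kpos^m$; specialized to $E_{\bk,\bj}$ it yields $\val\bigl(R_{\bk,\bj}(\puiseuxf)\bigr)=R^\trop_{\bk,\bj}(\val\puiseuxf)$. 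The same bookkeeping, carried out entrywise on products of the elementary matrices $x_i(\cdot),x_{\bar i}(\cdot),x_{\circled{\,i}}(\cdot)$ --- whose entries are $0$, $1$, or a single parameter, with $\val 0=-\infty$ matching the $-\infty$ entries of their tropical analogues --- shows $\val\bigl(x_{\bj}(\puiseuxf)\bigr)=x^\trop_{\bj}(\val\puiseuxf)$ for every $\puiseuxf\in\Kpos^{n^2}$.

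With these in hand the argument is short. Given $\bs\in\R^{n^2}$, lift it to $\widetilde{\bs}:=(t^{s_1},\dots,t^{s_{n^2}})\in\Kpos^{n^2}$, so that $\val\widetilde{\bs}=\bs$. Evaluating \eqref{e-def-R} at $\widetilde{\bs}$ and applying $\val$,
\[
x^\trop_{\bj}(\bs)=\val\bigl(x_{\bj}(\widetilde{\bs})\bigr)
=\val\bigl(x_{\bk}(R_{\bk,\bj}(\widetilde{\bs}))\bigr)
=x^\trop_{\bk}\bigl(R^\trop_{\bk,\bj}(\bs)\bigr),
\]
which is \eqref{e-change}. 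For \eqref{e-identity2}, I would observe that $E_{\bk,\bj}\circ E_{\bj,\bk}$ is again a subtraction-free expression, which by \eqref{e-inverse} represents the identity map of $\R_{>0}^{n^2}$ and whose tropicalization is $R^\trop_{\bk,\bj}\circ R^\trop_{\bj,\bk}$; hence, for every $\bs$, lifting as above, $R^\trop_{\bk,\bj}\bigl(R^\trop_{\bj,\bk}(\bs)\bigr)=\val\bigl(R_{\bk,\bj}(R_{\bj,\bk}(\widetilde{\bs}))\bigr)=\val\widetilde{\bs}=\bs$.

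The only delicate point --- and the one I would be most careful about --- is that the tropicalization of a subtraction-free rational map depends, a priori, on the chosen subtraction-free representation. I would sidestep this by fixing $E_{\bk,\bj}$ at the outset: the lifting argument then forces \eqref{e-change} and \eqref{e-identity2} for that particular choice, which is all the theorem asserts. A minor verification is that $\val\colon\Knneg\to\rmax$ is onto $\R$, which is immediate from $t^s\mapsto s$, so the lift $\widetilde{\bs}$ is available for every $\bs\in\R^{n^2}$. I expect no genuine obstacle here: the whole content is the correspondence between subtraction-free rational maps over $\Kpos$ and tropical rational maps over $\R$, combined with the cited classical results of Fomin and Zelevinsky.
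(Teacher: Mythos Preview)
Your proposal is correct and follows essentially the same strategy as the paper: both define $R^\trop_{\bk,\bj}$ by tropicalizing a fixed subtraction-free presentation of $R_{\bk,\bj}$ and then argue that the classical identities \eqref{e-def-R} and \eqref{e-inverse} carry over to the tropical setting because subtraction-free identities tropicalize.

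The only notable difference is in how \eqref{e-identity2} is justified. The paper argues formally: if a subtraction-free rational expression coincides with the identity on $\R_{>0}^{n^2}$, then for each coordinate the numerator polynomial equals the variable times the denominator polynomial (as formal polynomials, since they agree on a Zariski-dense set), and hence the tropicalization is literally the identity on $\R^{n^2}$. You instead lift $\bs$ to $\widetilde\bs=(t^{s_1},\dots,t^{s_{n^2}})\in\Kpos^{n^2}$, use \eqref{e-inverse} over $\K$, and push down via $\val$, exploiting that $\val(E(\puiseuxf))=E^\trop(\val\puiseuxf)$ for any subtraction-free $E$ and any $\puiseuxf\in\Kpos^m$. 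Your route reuses the nonarchimedean infrastructure already set up in the paper and makes explicit the compatibility $\val(x_{\bj}(\puiseuxf))=x^\trop_{\bj}(\val\puiseuxf)$, whereas the paper's route is a one-line formal-polynomial argument that avoids lifts altogether. Both are standard; your version is more self-contained, the paper's is shorter.
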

\begin{proof}
This is indeed an immediate consequence of the proof
of Theorem~23 of ~\cite{F&Z}, using
the observation, already made in~\cite{berensteinparametrization},
that a subtraction free rational fraction over the
set of positive numbers defines a tropical rational
fraction over the reals. Moreover, if a subtraction free
rational transformation $R$ coincides with the identity
map on $\R_{>0}^{n^2}$, then its numerator and denominator seen
as formal polynomials must coincide, which entails
that the corresponding tropical rational transformation
is the identity on $\R^{n^2}$. Hence,~\eqref{e-identity2} follows
from~\eqref{e-inverse}.
\end{proof}

\begin{exa}
 To give an example of the correspondence
 between $R_{\bj,\bk}$ and $R_{\bj,\bk}^{\trop}$,
 used in the proof of \Cref{coroFZ},
  consider the following
  commutation relation from~\cite{F&Z}, which holds for $i\in[n-1]$ and $j=i+1$,
\begin{align}
x_i (s_1)\, x_{\circi}(s_2)\, x_{\circj}(s_3)\, x_{\overline i}
(s_4)
= x_{\overline i}(s'_1) \, x_{\circi}(s'_2)\,
x_{\circj}(s'_3) \, x_i(s'_4) \,, \label{e-classical}
\end{align}
where 
\[
s'_1\!=\!\frac{s_3s_4}{T}\,,\ 
s'_2\!=\!{T}\,,\ 
s'_3\!=\!\frac{s_2s_3}{T}\,,\ 
s'_4\!=\!\frac{s_1s_3}{T}\,,\ 
T\!=\!s_2+s_1s_3s_4\enspace .
\]
This defines a map $R: s\mapsto s'$. 
The tropical analogue of relation~\eqref{e-classical} is
\[
x^\trop_i (s_1)\odot
x^\trop_{\circi}(s_2)\odot
x^\trop_{\circj}(s_3)\odot
x^\trop_{\overline i} (s_4)
 = 
x^\trop_{\overline i}(s'_1) \odot
x^\trop_{\circi}(s'_2)\odot
x^\trop_{\circj}(s'_3)  \odot
x^\trop_i(s'_4) , 
\]
where 
\[
s'_1={s_3+s_4}-{T}\,,\ 
s'_2={T}\,,\ 
s'_3={s_2+s_3}-{T}\,,\ 
s'_4={s_1+s_3}-{T}\,,\ 
T=\max(s_2,s_1+s_3+s_4)\,.
\]
\end{exa}

\begin{cor}\label{cor-uniquenew}
  If $A$ is a tropical totally positive matrix, and if $\bj$
  is an arbitrary factorization scheme, there exists
  a unique vector $\bs\in \R^{n^2}$ such that $x_{\bj}^\trop(\bs)=A$.
\end{cor}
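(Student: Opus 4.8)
The plan is to deduce \Cref{cor-uniquenew} from \Cref{cor-unique} by transporting the existence-and-uniqueness statement along the bijective tropical transition maps furnished by \Cref{coroFZ}. The starting observation is that the canonical word $\sbi$ of~\eqref{e-def-word} is itself a factorization scheme. Hence, for an \emph{arbitrary} factorization scheme $\bj$, \Cref{coroFZ} (applied to the pair $(\sbi,\bj)$) provides tropical rational transformations $R^\trop_{\sbi,\bj}$ and $R^\trop_{\bj,\sbi}$ on $\R^{n^2}$ which satisfy $x^\trop_{\bj}(\bs)=x^\trop_{\sbi}\bigl(R^\trop_{\sbi,\bj}(\bs)\bigr)$ for all $\bs\in\R^{n^2}$ by~\eqref{e-change}, and which are mutually inverse by~\eqref{e-identity2}.

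For existence, I would argue as follows. Given $A\in\TP^\trop$, \Cref{cor-unique} yields a vector $\bs^\star\in\R^{n^2}$ with $x^\trop_{\sbi}(\bs^\star)=A$. Then I set $\bs:=R^\trop_{\bj,\sbi}(\bs^\star)$ and compute, using~\eqref{e-change} with $\bk=\sbi$ followed by~\eqref{e-identity2}, that $x^\trop_{\bj}(\bs)=x^\trop_{\sbi}\bigl(R^\trop_{\sbi,\bj}(R^\trop_{\bj,\sbi}(\bs^\star))\bigr)=x^\trop_{\sbi}(\bs^\star)=A$, as wanted.

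For uniqueness, suppose $\bs_1,\bs_2\in\R^{n^2}$ both satisfy $x^\trop_{\bj}(\bs_k)=A$. Applying~\eqref{e-change} again shows that $R^\trop_{\sbi,\bj}(\bs_1)$ and $R^\trop_{\sbi,\bj}(\bs_2)$ are both preimages of $A$ under $x^\trop_{\sbi}$; since $A\in\TP^\trop$, the uniqueness clause of \Cref{cor-unique} forces $R^\trop_{\sbi,\bj}(\bs_1)=R^\trop_{\sbi,\bj}(\bs_2)$. Applying the inverse map $R^\trop_{\bj,\sbi}$ to both sides and invoking~\eqref{e-identity2} then gives $\bs_1=\bs_2$.

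I expect no genuine analytic obstacle here: the whole argument is a formal transport of structure. The two points requiring care are (i) confirming that $\sbi$ really is a factorization scheme, so that \Cref{coroFZ} is legitimately applicable to the pair $(\sbi,\bj)$, and (ii) keeping the indices and directions of the transition maps consistent with the conventions of~\eqref{e-change} and~\eqref{e-identity2}, so that the compositions cancel correctly. Everything else follows immediately.
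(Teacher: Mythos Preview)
Your proposal is correct and follows essentially the same approach as the paper: reduce to \Cref{cor-unique} via the invertible tropical transition maps supplied by \Cref{coroFZ}. The paper's proof is terser---it simply notes that $R^\trop_{\sbi,\bj}$ is invertible and that the corollary then follows---while you spell out the existence and uniqueness halves explicitly, but the content is identical.
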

\begin{proof}
  By \Cref{coroFZ}, there exists an invertible tropical rational
  transformation $R: \R^{n^2}\to \R^{n^2}$ such that $x_{\bj}(s)=x_{\bi}(R(s))$.
  Then, the corollary follows from \Cref{cor-unique}.
  \end{proof}

\bibliographystyle{alpha}

\bibliography{tropical}

\end{document}